\newtheorem{theo}{Theorem}
\newtheorem{prop}[theo]{Proposition}
\newtheorem{lemm}[theo]{Lemma}
\newcommand{\R}{\mathbb R}
\newcommand{\dd} {\mathrm{d}}
\newcommand{\La}{{\Lambda}}
\newcommand{\eps}{{\varepsilon}}
\def\osc{\operatorname{osc}}
\newcommand{\bu}{\bar{u}}
\newcommand{\dt}{{\partial_t}}
\title[H\"older regularity for Hamilton-Jacobi equations]{De Giorgi techniques applied to the H\"older regularity of solutions to Hamilton-Jacobi equations}
\author[Chan]{Chi Hin Chan}
\address[Chi Hin Chan]{\newline Department of Applied Mathematics, \newline National Chiao Tung University,1001 Ta Hsueh Road, Hsinchu, Taiwan 30010, ROC}
\email{cchan@math.nctu.edu.tw}
\author[Vasseur]{Alexis F. Vasseur}
\address[Alexis F. Vasseur]{\newline Department of Mathematics, \newline The University of Texas at Austin, Austin, TX 78712, USA}
\email{vasseur@math.utexas.edu}
\date{\today}
\subjclass[2010]{35F21,35B65} \keywords{Hamilton-Jacobi equation, H\"older regularity,  De Giorgi method}
\thanks{\textbf{Acknowledgment.} A. F. Vasseur was partially supported by the NSF Grant DMS 1209420. Ch.-H. Chan was partially supported by the grant NSC 103-2115-M-009-010-MY2 from the National Science Council of Taiwan.
}
\begin{document}

\begin{abstract}
This article is dedicated to the proof of $C^\alpha$ regularization effects of Hamilton-Jacobi equations. The proof is based on the De Giorgi method. The regularization is independent on the regularity of the Hamiltonian.
\end{abstract}

\maketitle \centerline{\date}

\section{Introduction}
This article is dedicated to the proof of $C^\alpha$ regularization effects of  Hamilton-Jacobi equations of the form:
\begin{equation}\label{eq:main}
\dt u+H(t,x, \nabla u)=0,\qquad t\in (0,T),\ \ x\in \R^N,
\end{equation}
with $T>0$, $N\in \mathbb{Z}^+$, and where the Hamiltonian verifies a uniform, in $x$ and $t$, coercivity property of the form:
\begin{equation}\label{eq:coercivity}
\frac{1}{\La} |P|^p-\Lambda\leq H(t,x,P)\leq \La |P|^p+\La,
\end{equation}
for a $p \in (1, N)$ and $\Lambda \geq 1$.
The main theorem, which is the focus of this article, is the following.
\begin{theo}\label{theo:main}
Let $N \in \mathbb{Z}^+$ and $p \in (1,N)$ to be given, and let $u$ be a a bounded viscosity solution on $(0,T)\times\R^N$ to \eqref{eq:main}, with a Hamiltonian $H(t,x , P)$ satisfying coercivity property \eqref{eq:coercivity}. Then, it follows that, for each $\delta \in (0,T)$, we have $u\in C^\alpha([\delta,T)\times\R^N)$, where $\alpha \in (0,1)$, and $\|u\|_{C^\alpha([\delta, T)\times \mathbb{R}^N)}$ depend only on $N$, $\delta$, $\Lambda$, $p$ and $\|u\|_{L^\infty ((0,T)\times \mathbb{R}^N)}$.
\end{theo}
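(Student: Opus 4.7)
The plan is to implement a De Giorgi-type strategy adapted to the first-order equation \eqref{eq:main}. Using the natural scaling compatible with the coercivity~\eqref{eq:coercivity}, I would first reduce Theorem~\ref{theo:main} to a uniform oscillation-reduction estimate: there exists $\lambda_0 \in (0,1)$ depending only on $N, p, \La$ such that whenever $u$ is a bounded viscosity solution of~\eqref{eq:main} on $(-1,0)\times B_1$ with $|u|\leq 1$, one has $\osc_{(-1/2,0)\times B_{1/2}} u \leq 2 - \lambda_0$. Iterating this at all scales (rescaling around any point $(t_0,x_0)$ with $t_0 \geq \delta$) yields the claimed Hölder bound, the restriction $t_0 \geq \delta$ simply ensuring that the rescaled cylinder sits inside $(0,T)\times\R^N$.

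The heart of the argument is the first De Giorgi lemma (propagation of smallness from measure to $L^\infty$): there exists $\ep_0 > 0$ such that, whenever $u\leq 1$ on $(-1,0)\times B_1$ and $|\{u \geq 0\}\cap (-1,0)\times B_1|\leq \ep_0$, we have $u \leq 1/2$ on a slightly smaller cylinder. The coercivity gives the one-sided viscosity inequality $\dt u + \La^{-1}|\nabla u|^p \leq \La$, which I would test against the truncations $u_k = (u-C_k)_+$ with $C_k \uparrow 1/2$ on nested shrinking cylinders. The crucial nonlinear gain is the identity $\int u_k|\nabla u_k|^p\,\dd x\,\dd t = c_p\int|\nabla u_k^{1+1/p}|^p\,\dd x\,\dd t$ followed by the Sobolev embedding $W^{1,p}\hookrightarrow L^{Np/(N-p)}$ (valid precisely because $p<N$), yielding a nonlinear recursion $U_{k+1} \leq C^k U_k^{1+\beta}$ on the natural energy levels $U_k = \sup_t \int u_k^2 + \iint |\nabla u_k^{1+1/p}|^p$. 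The recursion forces $U_k \to 0$ provided $U_0 \leq \ep_0$, which is exactly the measure hypothesis.

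To close the oscillation reduction one needs a second De Giorgi lemma (intermediate values): if both $\{u\geq 1/2\}$ and $\{u\leq -1/2\}$ occupy substantial measure in the cylinder, then the intermediate set $\{|u|\leq 1/2\}$ has quantitative positive measure. Classically this is proved via an isoperimetric argument driven by the diffusion, which is absent here; the purely first-order replacement must exploit only coercivity. I would try to combine both one-sided inequalities, $\dt u + \La^{-1}|\nabla u|^p \leq \La$ for truncations from above and the symmetric $\dt u + \La|\nabla u|^p \geq -\La$ for truncations from below, in a careful bookkeeping of space-time energies on overlapping positive and negative truncations, to rule out an instantaneous jump across $0$ without a middle layer of definite measure. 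Combining the two lemmas in the usual De Giorgi way then gives the oscillation-reduction, and scale invariance finishes the proof.

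The main obstacle I anticipate is this second, intermediate-values step, where the classical diffusive isoperimetric mechanism is unavailable and only coercivity in $P$ may be used. A pervasive secondary difficulty is that $u$ is only a viscosity solution, so multiplier-type energy estimates cannot be applied directly to $u$: I would either work on a vanishing-viscosity approximation $\dt u^\ep - \ep\Delta u^\ep + H(t,x,\nabla u^\ep) = 0$ with estimates uniform in $\ep$, then pass to the limit, or regularize $u$ by sup/inf-convolutions which preserve the one-sided viscosity inequalities almost everywhere. Either route, justifying the truncated energy estimates for $(u-C_k)_+$ in the viscosity framework, and doing so in a way that the constants depend only on the data listed in the theorem, will require the bulk of the technical work.
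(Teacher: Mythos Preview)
Your overall De Giorgi scaffold is right, and your first lemma is essentially the paper's (though the natural energy here is simpler: multiply $\dt u + \La^{-1}|\nabla u|^p \leq \La$ by $\chi_{\{v_k>0\}}$ to get $\dt v_k + \La^{-1}|\nabla v_k|^p \leq \La\chi_{\{v_k>0\}}$, and take $U_k=\sup_t\int v_k + \iint|\nabla v_k|^p$; no $u_k^{1+1/p}$ or $\int u_k^2$ is needed). Your worry about justifying energy estimates for viscosity solutions is also largely moot: a viscosity subsolution of a convex first-order inequality is a distributional subsolution, so testing against nonnegative truncations is legitimate without vanishing viscosity or inf/sup-convolutions.

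There is, however, a real gap in the oscillation reduction from below. Your ``symmetric'' inequality $\dt u + \La|\nabla u|^p \geq -\La$ does hold, but it is useless for De Giorgi on $-u$: with $w=-u$ it becomes $\dt w - \La|\nabla w|^p \leq \La$, the gradient term has the wrong sign, and integration gives no control on $\iint|\nabla w_+|^p$. The paper's remedy is a genuinely different idea you have not anticipated: the time reversal $v(t,x)=-u(-t,x)$ still satisfies $\dt v + \La^{-1}|\nabla v|^p \leq \La$ in the sense of distributions, so the full upper-bound machinery applies to $v$ and yields $u\geq -2+\lambda$, but on a time slab in the \emph{past}, e.g.\ $[-2,-1]\times B(1)$. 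To carry this gain forward to $[1,2]\times B(1/2)$ the paper builds an explicit viscosity subsolution barrier $\psi(t,x)=\min\bigl(-2+\lambda_1,\; -2 - c(t+2) + c'(1-|x|)\bigr)$ and invokes the comparison principle; this is the \emph{only} place the viscosity structure enters, and it forces one to maintain a global growth condition $u(t,x)\geq -2 - q(|x|-1)_+$ through every step of the scaling iteration.

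For the second lemma, your ``bookkeeping of overlapping truncations'' is too vague to be a plan, and the paper does something quite different. It argues by contradiction and compactness: assuming failure for every $\alpha=1/k$, the uniform bounds $\iint|\nabla u_+|^p\leq C$ and $\|\dt u_+\|_{\mathcal M}\leq C$ (both direct from the equation) allow extraction of an $L^1$ limit $\bar u$ with $|\{0<\bar u<1\}|=0$. The classical De Giorgi isoperimetric inequality is then applied \emph{in space only, at each fixed time}, forcing $\int_{B(1)}\bar u_+(t,\cdot)$ to be either $0$ or at least $|B(1)|$; a short time-monotonicity argument then yields the contradiction. Without this compactness step, or a concrete substitute, your outline does not close.
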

The result of the paper is not new. However the method of proof, based on the De Giorgi method \cite {DeGiorgi} to study the regularity of elliptic equations with rough coefficients, is pretty unusual for the study of viscosity solutions, and should lead to new results in this area, as the study of regularity of solutions to  nonlocal "Hamilton-Jacobi like" equations (\cite{CVencours}).  Our proof is inspired by  previous applications of the De Giorgi method to integral-differential parabolic equations \cite{CCV,CaffarelliVasseur2010}.
\vskip0.3cm
The first H\"older regularity result of this kind was obtained by Schwab, in \cite{Schw-09}, in the case of a convex Hamiltonian. The result was a key ingredient to perform the  stochastic homogenization of Hamilton-Jacobi equations in Stationary Ergodic Spatio-Temporal media. This result inspired several generalizations. The non-convex case is technically more challenging. The first proofs, relying  on stochastic methods, were obtained by Cardaliaguet \cite{Card2009}, and Cannarsa Cardaliaguet \cite{CannCard2010}.
 Cardaliaguet and Silvestre provided a simpler proof in \cite{CardSilvestre}. Their proof is based on the construction of sub-solutions and supersolutions combined with improvement of oscillation techniques. It includes applications to some degenerated parabolic equations (and also includes our case).
\vskip0.3cm
Hamilton-Jacobi equations have solutions with breakdown of the $C^1$-regularity in finite time, due to the formation of so-called caustics.
It is quite remarkable that a typical Hamilton-Jacobi equation has some regularization effect on its solutions at a lower level $C^\alpha$, for some $\alpha \in (0,1)$.
This effect was first observed for time-independent, degenerated, elliptic Hamilton-Jacobi equations by Barles in \cite{Barles}.
\vskip0.3cm
Our proof uses the coercivity of the Hamilton-Jacobi equation to induce a parabolic-like regularization effect. It is based on De Giorgi techniques which provide $C^\alpha$-regularization for elliptic equations with rough coefficients. It involves the decrease of the oscillation
of the solution from scale to scale. While obtaining improved oscillation of the solution from above, we only use that the solution verifies (\ref{eq:main}) in the sense of distributions (the ``viscosity solution" structure, based on the comparison principle, is not used).  When we need to shrink the oscillation of the solution by below, the regularization effect is obtained backward in time. In the backward in time regularization process, the ``viscosity structure" of the solution is still irrelevant. However, to be consistent with the regularization by above, the backward in time regularization needs to be pushed back in positive time. This is the only part of the proof which needs the comparison principle.
\vskip0.3cm
Note that  $u+\Lambda t$  verifies
\begin{equation}\label{superviscosity}
\dt v +\La |\nabla v|^p\geq0.
\end{equation}
This inequality is slightly better in the rescaling process.
So, without loss of generality, we will assume that
\begin{equation}\label{eq:coercivitybis}
\frac{1}{\La} |P|^p-\Lambda\leq H(P)\leq \La |P|^p,
\end{equation}
instead of (\ref{eq:coercivity}).

\vskip0.1cm
\noindent{\bf Remark:} It would help a lot to have, at every scale,
$$
\dt v +\La |P|^p\geq \Lambda.
$$
But this inequality will not be preserved via the scaling.
\vskip0.3cm
The rest of the paper is structured as follows. In Section \ref{section:first}, we derive the first lemma of De Giorgi.  In Section \ref{section:second}, we prove the second lemma of De Giorgi. In Section \ref{section:oscillation} we show how the oscillation can be reduced locally. The precise scaling leading to the $C^\alpha$ regularity is provided in Section \ref{section:final}.

\section{The first De-Giorgi's Lemma}\label{section:first}

In this section, we will consider weak (distributional) solutions to the following differential inequality, with $\Lambda \geq 1$ and $1 < p < N$ to be some given constants.
\begin{equation}\label{basicineqONE}
\dt u +\frac{1}{\Lambda}|\nabla u|^p\leq \Lambda .
\end{equation}
We recall that every viscosity solution to (\ref{basicineqONE}) is solution in the sense of distribution of the same equation (see Crandall and Lions \cite{CrandallLions1982}).

\begin{lemm}\label{lemm:first}
There exists an absolute constant $\delta = \delta (N,\Lambda , p)>0$ such that, for any weak solution $u : [0,2]\times B(1) \rightarrow \mathbb{R}$ to \eqref{basicineqONE} on $[0,2]\times B(1)$, we have the following implication:

If it happens that
$$
\int_{ [0,2]\times B(1)}u_+\leq \delta,
$$
then it follows that
$$
u\leq 1\qquad \mathrm{on} \ \ [1,2]\times B(1).
$$
Here, $B(r)$ stands for the open ball centered at the origin $O$ with radius $r$ in $\mathbb{R}^N$.
\end{lemm}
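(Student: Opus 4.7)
I would prove the lemma by a De Giorgi iteration. Because the equation is first order, no spatial integration by parts is needed and the iteration can be taken to shrink only in time. Introduce the levels $C_k = 1 - 2^{-k}$ (so $C_k\uparrow 1$), the truncations $u_k = (u-C_k)_+$, the times $T_k = 1-2^{-k}$, and the cylinders $Q_k = [T_k,2]\times B(1)$. Since a bounded viscosity solution of a coercive Hamilton--Jacobi inequality is locally Lipschitz, I can test the distributional inequality \eqref{basicineqONE} against the nonnegative Lipschitz function $u_k$ and integrate over $[s,\tau]\times B(1)$; using $u_k\,\partial_t u = \tfrac{1}{2}\partial_t(u_k^2)$ and $u_k|\nabla u|^p = u_k|\nabla u_k|^p$, this gives
\[
\tfrac{1}{2}\!\int_{B(1)}\! u_k^2(\tau)\,dx + \tfrac{1}{\Lambda}\!\int_s^\tau\!\!\int_{B(1)}\! u_k|\nabla u_k|^p \,dx\,dt \leq \tfrac{1}{2}\!\int_{B(1)}\! u_k^2(s)\,dx + \Lambda\!\int_s^\tau\!\!\int_{B(1)}\! u_k\,dx\,dt.
\]
Averaging in $s\in[T_{k-1},T_k]$ (so that $(T_k-T_{k-1})^{-1}=2^k$ appears) and using $u_k \leq u_{k-1}$, I obtain the energy estimate
\[
\sup_{\tau\in[T_k,2]}\!\int_{B(1)}\! u_k^2(\tau)\,dx + \int_{Q_k}\! u_k|\nabla u_k|^p\,dx\,dt \leq C\cdot 2^k\!\int_{Q_{k-1}}\! u_{k-1}^2 + C\!\int_{Q_{k-1}}\! u_{k-1}.
\]

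The weighted gradient integral is recast via the pointwise identity $|\nabla u_k^{(p+1)/p}|^p = \bigl(\tfrac{p+1}{p}\bigr)^{\!p}\, u_k|\nabla u_k|^p$, turning the bound into a genuine $W^{1,p}(B(1))$ estimate for $v_k := u_k^{(p+1)/p}$. Sobolev's embedding $W^{1,p}(B(1))\hookrightarrow L^{p^*}(B(1))$ with $p^* = Np/(N-p)$ combined with the $L^\infty_t L^2_x$-bound on $u_k$ and standard parabolic interpolation then yields $u_k \in L^{q_0}(Q_k)$ for an explicit $q_0>2$ depending only on $N$ and $p$. Next, the standard De Giorgi observation $\mathbf{1}_{\{u_k>0\}}\leq 2^{rk}u_{k-1}^r$ (valid since $u_{k-1}\geq 2^{-k}$ on $\{u_k>0\}$), together with H\"older's inequality, converts the right-hand side of the energy estimate into super-linear powers of $U_{k-1} := \int_{Q_{k-1}} u_{k-1}^{q_0}$. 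This assembles into a nonlinear recursion
\[
U_k \leq K^k\, U_{k-1}^{1+\gamma}
\]
with $\gamma>0$ and $K = K(N,\Lambda,p)$. A classical iteration lemma gives $U_k\to 0$ whenever $U_0$ is below an explicit threshold. Since $U_0$ is controlled in terms of $\int_{[0,2]\times B(1)} u_+$ (using an a priori $L^\infty$ bound on $u$ from the ambient application of the lemma, or by choosing the iteration norm to be invariant in $\int u_+$), choosing $\delta$ sufficiently small yields $U_\infty = 0$, i.e.\ $u\leq 1$ a.e.\ on $[1,2]\times B(1)$.

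The main obstacle is the derivation of a usable energy estimate. Unlike in second-order parabolic problems, there is no divergence structure to integrate by parts against, so the gradient term appears naturally in the weighted form $\int u_k|\nabla u_k|^p$, which is not directly amenable to Sobolev embedding; the identity $|\nabla u_k^{(p+1)/p}|^p \propto u_k|\nabla u_k|^p$ is the crucial algebraic move that rescues the argument. A secondary but still delicate issue is arranging the exponents so that the constant source $\Lambda$ in \eqref{basicineqONE}, which naively contributes only a linear $\int u_{k-1}$ to the energy bound, is transmuted via the indicator-to-integral conversion $\mathbf{1}_{\{u_k>0\}}\leq 2^{rk}u_{k-1}^r$ into a genuine super-linear term in $U_{k-1}$; otherwise the recursion would fail to close.
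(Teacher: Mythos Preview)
Your iteration is a legitimate De Giorgi scheme, but it takes a detour that the paper avoids. The paper does \emph{not} test \eqref{basicineqONE} against $u_k$; instead it multiplies by the indicator $\chi_{\{v_k>0\}}$, which for a first-order equation produces directly
\[
\partial_t v_k + \tfrac{1}{\Lambda}|\nabla v_k|^p \le \Lambda\,\chi_{\{v_k>0\}},
\]
with an \emph{unweighted} gradient term. So there is no need for your substitution $u_k\mapsto u_k^{(p+1)/p}$: the paper applies Sobolev to $v_k$ itself, interpolates between $L^\infty_tL^1_x$ and $L^p_tL^{p^*}_x$, and runs the recursion on
\[
U_k=\sup_{t\in[T_k,2]}\int_{B(1)}v_k+\int_{T_k}^2\!\!\int_{B(1)}|\nabla v_k|^p.
\]
Your identification of the weighted term $\int u_k|\nabla u_k|^p$ as ``the main obstacle'' is therefore an artifact of the test function you chose; the obstacle disappears once you multiply by $\chi_{\{v_k>0\}}$ instead.

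This difference in test function also bears on the closing step you flag. Because the paper's $U_k$ is \emph{linear} in $v_k$, one gets immediately $U_k\le C(\Lambda)\int_{[0,2]\times B(1)}u_+$ (using $\chi_{\{v_k>0\}}\le 2u_+$), with no $L^\infty$ assumption on $u$. In your scheme the iterated quantity is $\int u_k^{q_0}$ with $q_0>2$, and $U_0=\int u_+^{q_0}$ is genuinely not controlled by $\int u_+$ alone; you acknowledge this and appeal to an ambient $L^\infty$ bound, but the lemma as stated does not assume one. So as written your argument proves a slightly weaker statement (the lemma under an additional qualitative boundedness hypothesis), whereas the paper's choice of energy closes the loop cleanly. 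Your route can be repaired, but the paper's is the more natural one here precisely because the equation is first order.
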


\begin{proof}
Let $u :[0,2]\times B(1) \rightarrow \mathbb{R}$ to be a weak solution to \eqref{basicineqONE} on $[0,2]\times B(1)$ which satisfies all the hypothesis in Theorem \ref{lemm:first}. First, we work with a sequence of truncated functions $v_k$ on $[0,2]\times B(1)$ which is defined as follows.
\begin{equation*}
v_k = \big ( u - (1-\frac{1}{2^k}) \big )_+ .
\end{equation*}
Next, we multiply \eqref{basicineqONE} by $\chi_{\{v_k > 0\}}$ to yield the following relation
\begin{equation}\label{truncatedineqONE}
\dt v_k + \frac{1}{\Lambda} |\nabla v_k|^p \leq \Lambda \chi_{\{v_k > 0\}} ,
\end{equation}
which holds in the distributional sense on $[0,2]\times B(1)$, for each integer $k\geq 1$.\\

Inequality \eqref{truncatedineqONE} is the ground on which we will build up a nonlinear recurrence relation for the following sequence of truncated energies.
\begin{equation*}
U_k = \sup_{t\in [T_k , 2]} \int_{B(1)} v_k(t , \cdot ) + \int_{T_k}^2 \int_{B(1)} |\nabla v_k (t ,y  )|^p \dd y  \dd t ,
\end{equation*}
where $T_k = 1-\frac{1}{2^k}$, for each $k \in \mathbb{Z}^+$.
Next, take any two real numbers $\sigma$, $t$ which satisfy the following constraint.
\begin{equation*}
T_{k-1} \leq \sigma \leq T_k \leq t \leq 2.
\end{equation*}
By taking the spatial-integral over $B(1)$ and then the time-integral over the interval $[\sigma , t]$ for each term in \eqref{truncatedineqONE}, we yield
\begin{equation}\label{good}
\int_{B(1)} v_k(t,\cdot ) + \frac{1}{\Lambda}\int_{\sigma}^t \int_{B(1)} |\nabla v_k|^p(\tau , \cdot ) d\tau
\leq \int_{B(1)} v_k(\sigma ,\cdot ) + \Lambda \int_{\sigma}^t \int_{B(1)} \chi_{\{v_k > 0\}}
\end{equation}
Next, by taking the time average over $\sigma \in [T_{k-1} , T_k]$ for each term in the above inequality, we easily yield
\begin{equation*}
\int_{B(1)} v_k(t,\cdot ) + \frac{1}{\Lambda}\int_{T_k}^t \int_{B(1)} |\nabla v_k|^p(\tau , \cdot ) d\tau
\leq 2^k \int_{T_{k-1}}^{T_k}\int_{B(1)} v_k(\sigma ,\cdot ) + \Lambda \int_{T_{k-1}}^t \int_{B(1)} \chi_{\{v_k > 0\}} ,
\end{equation*}
from which it follows, through taking the $\sup$ over $t\in [T_k , 2]$, that the following relation holds.
\begin{equation*}
\sup_{t\in [T_k , 2]}\int_{B(1)} v_k(t, \cdot ) + \frac{1}{\Lambda}\int_{T_k}^2 \int_{B(1)} |\nabla v_k|^p
\leq \int_{[T_{k-1}, 2]\times B(1)} \big ( 2^k v_k + \Lambda \chi_{\{ v_k >0 \}} \big ) .
\end{equation*}
The above inequality immediately gives
\begin{equation}\label{raiseprep}
U_k \leq \big ( 2^k \Lambda + \Lambda^2  \big ) \bigg \{ \int_{[T_{k-1} , 2]\times B(1)} v_k +
\int_{[T_{k-1} , 2] \times B(1)} \chi_{\{ v_k > 0 \}}  \bigg \}.
\end{equation}
Since $1<p<N$, by using the Sobolev's embedding theorem, we have the following estimate, with $C(N,p) > 0$ to be some absolute constant.
\begin{equation*}
\bigg \| v_k - \frac{1}{|B(1)|} \int_{B(1) } v_k     \bigg \|_{L^{\frac{Np}{N-p}}(B(1))} \leq C(N,p) \big \|\nabla v_k \big \|_{L^p(B(1))} ,
\end{equation*}
from which it follows that
\begin{equation*}
\begin{split}
\big \|v_k \big \|_{L^{\frac{Np}{N-p}}(B(1))} &\leq \frac{|B(1)|^{\frac{1}{p} - \frac{1}{N}}}{|B(1)|}  \int_{B(1)} v_k  +
\bigg \| v_k - \frac{1}{|B(1)|} \int_{B(1) } v_k     \bigg \|_{L^{\frac{Np}{N-p}}(B(1))} \\
& \leq |B(1)|^{\frac{1}{p}-\frac{1}{N}-1} \int_{B(1)} v_k + C(N,p) \big \| \nabla v_k \big \|_{L^p(B(1))} .
\end{split}
\end{equation*}
By raising up the power $p$ on both sides of the above estimate and then taking integration over $t \in [T_k , 2]$, we yield
\begin{equation*}
\begin{split}
\int_{T_k}^2 \big \| v_k (t , \cdot ) \big \|_{L^{\frac{Np}{N-p}}(B(1))}^p \dd t & \leq C(N,p)  \int_{T_k}^2 \bigg \{ \bigg ( \int_{B(1)} v_k(t, \cdot ) \bigg )^p  + \big \|\nabla v_k (t, \cdot ) \big \|_{L^p(B(1))}^p  \bigg \} \dd t \\
& \leq  C(N,p) \bigg \{ 2 U_k^p + \int_{T_k}^2  \big \|\nabla v_k (t , \cdot ) \big \|_{L^p(B(1))}^p  \dd t     \bigg \} \\
& \leq  C(N,p) \bigg \{ 2U_k^p + U_k   \bigg \} .
\end{split}
\end{equation*}
That is, we have
\begin{equation*}
\big \| v_k \big \|_{L^p (T_k , 2 ; L^{\frac{Np}{N-p}}(B(1)) )} \leq C(N,p) \big \{ U_k + U_k^{\frac{1}{p}}  \big \} ,
\end{equation*}
from which it follows, by means of interpolation, that
\begin{equation*}
\begin{split}
\big \|v_k\big \|_{L^{p(1+ \frac{1}{N})} ([T_k , 2]\times B(1))} & \leq \big \| v_k \big \|_{L^{\infty}(T_k , 2 ; L^1(B(1)))}^{1-\frac{N}{N+1}}
\cdot \big \| v_k  \big \|_{L^p ( T_k , 2 ; L^{\frac{Np}{N-p}} (B(1))  )}^{\frac{N}{N+1}} \\
& \leq C(N,p) U_k^{\frac{1}{N+1}} \big \{ U_k + U_k^{\frac{1}{p}} \big \}^{\frac{N}{N+1}} .
\end{split}
\end{equation*}
That is, we have
\begin{equation}\label{Luckyinequality}
\int_{[T_k ,2]\times B(1)} |v_k|^{p(1+\frac{1}{N})} \leq C(N,p) U_k^{\frac{p}{N}} \big \{ U_k + U_k^{\frac{1}{p}}  \big \}^p .
\end{equation}
By means of \eqref{Luckyinequality}, we can now raise up the index for the two terms appearing in \eqref{raiseprep} as follows.
\begin{equation}\label{raiseindexONE}
\begin{split}
\int_{[T_{k-1},2]\times B(1)}v_k & \leq \int_{[T_{k-1}, 2]\times B(1)}v_{k-1} \chi_{\{v_{k-1} > \frac{1}{2^k} \}} \\
& \leq 2^{k( p-1 + \frac{p}{N} )} \int_{[T_{k-1}, 2]\times B(1)} v_{k-1}^{p(1+\frac{1}{N})} \\
& \leq C(N,p)   2^{k( p-1 + \frac{p}{N} )} U_{k-1}^{\frac{p}{N}} \big \{ U_{k-1} + U_{k-1}^{\frac{1}{p}}  \big \}^p
\end{split}
\end{equation}
\begin{equation}\label{raiseindextwo}
\begin{split}
\int_{[T_{k-1} , 2]\times B(1)} \chi_{\{v_k > 0 \}} & \leq \int_{[T_{k-1}, 2 ]\times B(1)} \chi_{\{ v_{k-1} > \frac{1}{2^k} \}} \\
& \leq 2^{k p (1+ \frac{1}{N})} \int_{[T_{k-1} , 2]\times B(1)} v_{k-1}^{p(1+ \frac{1}{N})} \\
& \leq 2^{kp (\frac{N+1}{N})} C(N,p) U_{k-1}^{\frac{p}{N}} \big \{ U_{k-1} + U_{k-1}^{\frac{1}{p}} \big \}^p
\end{split}
\end{equation}
So, it follows from \eqref{raiseprep}, \eqref{raiseindexONE}, and \eqref{raiseindextwo} that the following relation holds for each $k\geq 1$, where $C(N,p,\Lambda ) > 0$ is some absolute constant which depends only on $N$, $p$ and $\Lambda$.
\begin{equation}\label{nonlinearone}
U_k \leq C(N,p,\Lambda )^{k} U_{k-1}^{\frac{p}{N}} \big ( U_{k-1} + U_{k-1}^{\frac{1}{p}} \big )^p .
\end{equation}


Now, for some technical purpose, we now need to verify the following relation for all $k \geq 1$.
\begin{equation}\label{easyineq}
U_k \leq  2\Lambda \int_{[0,2]\times B(1)} v_k + \Lambda^2 \int_{[0,2]\times B(1)} \chi_{\{ v_k > 0 \}}
\end{equation}
In order to verify \eqref{easyineq} for each $k\geq 1$, we first recall that relation \eqref{good} holds for all variables $\sigma$, $t$ which satisfy the constraint $0 \leq \sigma \leq T_k \leq t \leq 2$. Thanks to the fact that $T_k \geq \frac{1}{2}$ holds for any $k\geq 1$, by taking the average over $\sigma \in [0, T_k]$ on each term of \eqref{good}, we easily yield the following estimate
\begin{equation*}
\int_{B(1)}v_k(t,\cdot ) + \frac{1}{\Lambda} \int_{T_k}^t \int_{B(1)} |\nabla v_k|^p \leq 2 \int_{[0,2]\times B(1)} v_k +
\Lambda \int_0^t \int_{B(1)} \chi_{\{ v_k > 0 \}} ,
\end{equation*}
which holds for $t \in [T_k , 2]$. So, by simply taking $\sup$ over $t \in [T_k , 2]$ of each term which appears in the above estimate, we immediately obtain \eqref{easyineq} for each $k \geq 1$, as desired.
Since it is obvious that the following estimate is valid for each $k \geq 1$
\begin{equation*}
\int_{[0,2]\times B(1)} \chi_{\{ v_k > 0\}} \leq 2 \int_{[0,2]\times B(1)} u_+ ,
\end{equation*}
it follows directly from \eqref{easyineq} that we have the following estimate for each $k \geq 1$
\begin{equation}\label{sortofimport}
U_k \leq 2 \big (  \Lambda + \Lambda^2   \big ) \int_{[0,2]\times B(1)} u_+ .
\end{equation}
\eqref{sortofimport} immediately leads to the following assertion
\begin{itemize}
\item If it happens that $\int_{[0,2]\times B(1)} u_+ < \frac{1}{2\Lambda (1+ \Lambda )}$, then it follows that $U_k < 1$ holds for all $k \geq 1$.
\end{itemize}
Due to the above assertion, we can now say that as long as $u$ satisfies $\int_{[0,2]\times B(1)} u_+ < \frac{1}{2\Lambda (1+ \Lambda )}$, it follows from \eqref{nonlinearone} that the following nonlinear recurrence relation holds for each $k \geq 1$.
\begin{equation}\label{nonlinearrecurrence}
U_k \leq D(N,p,\Lambda ) U_{k-1}^{1 + \frac{p}{N}} ,
\end{equation}
in which $D(N,p, \Lambda ) > 0$ again depends only on $N$, $p$ and $\Lambda$. In light of \eqref{nonlinearrecurrence}, it is time to recall the following well-known assertion of the De-Giorig's method.
\begin{itemize}
\item \textbf{Assertion I} Let $D(N,p, \Lambda ) > 0$ to be the absolute constant which appears in \eqref{nonlinearrecurrence}. Then, there exists some $\varepsilon_0 \in (0,1)$, which depends only on $D(N,p, \Lambda ) > 0$ and $1+ \frac{p}{N}$, such that for any sequence $\{a_k\}_{k=1}^{\infty}$ of nonnegative numbers for which $a_1 \leq \varepsilon_0$ holds and for which the relation $a_k \leq D(N,p,\lambda) a_{k-1}^{1+\frac{p}{N}}$ holds for all $k \geq 1$, it follows that $\lim_{k\rightarrow \infty} a_k = 0$.
\end{itemize}
In accordance with the above assertion, we now take
\begin{equation*}
\delta =  \frac{\varepsilon_0}{2\Lambda (1+ \Lambda )} .
\end{equation*}
Then, whenever $u: [0,2]\times B(1)\rightarrow \mathbb{R}$ is a solution to \eqref{basicineqONE} which satisfies $\int_{[0,2]\times B(1)}u_+ < \delta$, the associated sequence $U_k$ of truncated energies must satisfy both $U_1 < \varepsilon_0$ and \eqref{nonlinearrecurrence} for each $k\geq 1$, and hence it follows from \textbf{Assertion I} that $\lim_{k\rightarrow \infty}U_k = 0$. This immediately lead to the following conclusion:
\begin{itemize}
\item If $u :[0,2] \times B(1) \rightarrow \mathbb{R}$ is a solution to \eqref{basicineqONE} which satisfies $\int_{[0,2]\times B(1)} u_+ < \delta$, it follows that $u_+ \leq 1$ holds on $[1,2]\times B(1)$.
\end{itemize}
In other words, the proof of Lemma \ref{lemm:first} is now completed.
\end{proof}
\section{The second De-Giorgi's Lemma}\label{section:second}

We want to show now the following lemma.
\begin{lemm}\label{lemm:second}
Let $N\in \mathbb{Z}^+$, and $p \in (1,N)$ to be given. Then there exists some absolute constant $\alpha = \alpha (N,\Lambda , p )>0$, such that,
for any function $u: [-2,2]\times B(1)\rightarrow \mathbb{R}$ which is a weak solution (in the sense of distribution) to
$$
\dt u +\frac{1}{\Lambda}|\nabla u|^p\leq \Lambda, \qquad \mathrm{on} \ \  [-2,2]\times B(1),
$$
we have the following implication:\\
If it happens that $u\leq 2$ holds on $ [-2,2]\times B(1)$, and that $u$ satisfies the following two properties
\begin{eqnarray}
\label{eq:bas}
&& \big |\{ (t,x) \in [-2,2]\times B(1) : u(t,x)\leq 0\} \big |\geq \frac{|[-2,2]\times B(1)|}{2},\\
\label{eq:milieu}
&&\big |\{ (t,x) \in [-2,2]\times B(1) : 0 < u(t,x) < 1 \} \big |\leq \alpha,
\end{eqnarray}
then it follows that
$$
\int_{ [0,2]\times B(1)}[u-1]_+ < \frac{\delta}{2},
$$
where $\delta = \delta (N,\lambda , p) >0$ is the absolute constant whose existence is asserted in Lemma \ref{lemm:first}.
\end{lemm}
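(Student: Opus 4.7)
The plan is to combine a De Giorgi isoperimetric inequality applied pointwise in time with a forward-in-time propagation driven by the energy inequality. Write $A(t)=\{x\in B(1):u(t,x)\le 0\}$, $A^*(t)=\{x\in B(1):u(t,x)\ge 1\}$, and $C(t)=\{x\in B(1):0<u(t,x)<1\}$, so the hypotheses read $\int_{-2}^2 |A(t)|\,\dd t\ge 2|B(1)|$ and $\int_{-2}^2 |C(t)|\,\dd t\le\alpha$. I would work with the truncation $v=\min(\max(u,0),1)\in[0,1]$, which equals $0$ on $A(t)$, equals $1$ on $A^*(t)$, and has $\nabla v=\chi_{C(t)}\nabla u$ almost everywhere.

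The first step is an energy estimate for $v$: multiplying the differential inequality by $\chi_{\{0<u<1\}}$ yields, in the distributional sense, $\dt v+\Lambda^{-1}|\nabla v|^p\le \Lambda\chi_{\{0<u<1\}}$. Integrating over $[-2,2]\times B(1)$ and using $0\le v\le 1$ together with $\int|C|\le\alpha$ gives a universal bound on $\iint_{[-2,2]\times B(1)}|\nabla v|^p$. The second step is the classical De Giorgi isoperimetric lemma, applied slicewise to $v(t,\cdot)$ with levels $0$ and $1$: $|A(t)|\,|A^*(t)|\le C(N)\int_{C(t)}|\nabla v(t,\cdot)|$. H\"older in space with exponent $p$, then H\"older in time with exponent $p/(p-1)$, combined with the energy bound above, give
\[
\int_{-2}^2 |A(t)|\cdot|A^*(t)|\,\dd t\le C(N,p,\Lambda)\,\alpha^{(p-1)/p}.
\]

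The third step locates an early ``good time'' $t_1\in[-2,0]$ at which $|A(t_1)|$ is a definite fraction of $|B(1)|$ while both $|A^*(t_1)|$ and $|C(t_1)|$ are as small as a power of $\alpha$. Setting $F=\{t:|A(t)|\ge|B(1)|/4\}$, the hypothesis forces $|F|$ to be universally bounded below; a Chebyshev argument on the estimate of the second step produces many $t\in F$ with $|A^*(t)|\lesssim\alpha^{(p-1)/p}$, and intersecting with $\{t:|C(t)|\le\alpha^{1/2}\}$ (of almost full measure) supplies the candidate $t_1$. At such $t_1$, the energy inequality for $v$ itself gives the near-monotonicity $\int_{B(1)} v(t)\le\int_{B(1)} v(t_1)+\Lambda\alpha$ for $t\ge t_1$, so $|A^*(t)|\le\int v(t)\le|A^*(t_1)|+|C(t_1)|+\Lambda\alpha$ for every $t\in[t_1,2]$. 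Integrating this over $t\in[0,2]$ and using $[u-1]_+\le\chi_{A^*}$ yields $\int_{[0,2]\times B(1)}[u-1]_+\lesssim \alpha^{(p-1)/p}+\alpha^{1/2}+\Lambda\alpha$, which is less than $\delta/2$ for $\alpha$ chosen small enough in terms of $\delta,N,p,\Lambda$.

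The main obstacle is guaranteeing that the good time $t_1$ can actually be chosen in $[-2,0]$: the global hypothesis $\int_{-2}^2 |A|\ge 2|B(1)|$ does not a priori prevent $F$ from sitting entirely in $(0,2]$. I expect to overcome this either by a case split (if $\int_{-2}^0|A|\ge|B(1)|$ then the pigeon-hole places $F\cap[-2,0]$ nontrivially, and the complementary case $\int_0^2|A|\ge|B(1)|$ must be handled by chaining nearby good times forward inside $[0,2]$ itself), or by exploiting the one-sided pointwise rate bound $\dt u\le\Lambda$ (obtained by discarding the nonnegative gradient term) to forward-translate the set $\{u\le 0\}$ on a short time scale and thereby force $F\cap[-2,0]$ to be nontrivial.
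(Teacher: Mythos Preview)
Your ingredients---the energy bound on $\nabla v$, the slicewise De Giorgi isoperimetric inequality $|A(t)|\,|A^*(t)|\le C_N\int_{C(t)}|\nabla v|$, and the near-monotonicity $m(t):=\int_{B(1)} v(t)\le m(s)+\Lambda\alpha$ for $s\le t$---are exactly the ones underlying the paper's argument. The paper, however, bypasses all quantitative bookkeeping by a \emph{compactness} proof: assuming the lemma fails for $\alpha=1/k$, it extracts (via the bounds of your Step~1 together with a total-variation estimate on $\partial_t u_+$) an $L^1$-limit $\bar u$ with $|\{0<\bar u<1\}|=0$ exactly. The isoperimetric dichotomy then becomes sharp for almost every $t$ (either $\bar u(t,\cdot)\le 0$ or $\bar u(t,\cdot)\ge 1$ on all of $B(1)$), and forward propagation reduces to the clean statement that once $\int\bar u_+(s)=0$ it stays below $|B(1)|/2$ for a fixed time step, hence stays~$0$ forever---contradicting $\int_{[0,2]\times B(1)}[\bar u-1]_+\ge\delta/2$. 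Your direct route would give an explicit $\alpha$; the paper's is shorter but non-constructive.

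The obstacle you flag is genuine, and neither workaround closes it as written. Forward-translating $\{u\le0\}$ via $\partial_t u\le\Lambda$ pushes information to \emph{later} times, so it cannot manufacture good times in $[-2,0]$; and ``chaining inside $[0,2]$'' leaves $[0,t_1)$ uncontrolled when $t_1$ is bounded away from~$0$. A clean repair of your quantitative scheme is to argue by contradiction on the conclusion itself. If $\int_0^2|A^*|\ge\delta/2$, then $\int_0^2 m\ge\delta/2$; since $\int_0^2 m\le 2\inf_{[-2,0]}m+2\Lambda\alpha$ by near-monotonicity, one gets $m(s)\ge\delta/4-\Lambda\alpha$ for \emph{every} $s\in[-2,0]$, hence $|A^*(s)|\ge\delta/8$ on the set $\{|C(s)|\le\alpha^{1/2}\}\cap[-2,0]$ of nearly full measure. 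Feeding this uniform lower bound into your integrated isoperimetric estimate forces $\int_{-2}^0|A|\lesssim\alpha^{(p-1)/p}/\delta+\alpha^{1/2}$. But then the hypothesis $\int_{-2}^2|A|\ge 2|B(1)|$ yields $\int_0^2(|A^*|+|C|)=2|B(1)|-\int_0^2|A|\le\int_{-2}^0|A|$, contradicting $\int_0^2|A^*|\ge\delta/2$ once $\alpha$ is small.
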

\begin{proof}
We divide the proof in several parts.
\vskip0.3cm
\noindent{\bf Step 1.} For any $u$ verifying (\ref{eq:bas}), we have the following relation for any $s,t\in [-2,2]$ with $s <t$ .
$$
\int_{B(1)}u_+(t,x)\,dx \leq \int_{B(1)}u_+(s,x)\,dx -\frac{1}{\La}\int_{s}^t\int_{B(1)} |\nabla u_+|^p\,dx\,ds+(t-s)\La |B(1)|.
$$
By taking $t=2, s=-2$ in the above estimate, we obtain the following estimate.
\begin{equation}\label{eq:gradient}
\int_{[-2,2]\times B(1)}|\nabla u_+|^p\,dx\,ds \leq C(\La).
\end{equation}
Note that the following estimate holds
$$
\dt u_+\leq \Lambda.
$$
So obviously, we have
$$
\|(\dt u_+)_+\|_{\mathcal{M}}\leq \Lambda |B(1)|,
$$
where $\|\cdot\|_\mathcal{M}$ stands for norm of measures in $[-2,2]\times B_1$.
But
\begin{equation}
\begin{split}
\int_{[-2,2]\times B(1)}|(\dt u_+)_-|\,dx\,dt & \leq -\int_{[-2,2]\times B(1)}\partial_t u_+ \,dx\,dt+\int_{[-2,2]\times B(1)}(\partial_t u_+)_+\,dx\,dt \\
& \leq 4|B(1)|+4\La |B(1)|.
\end{split}
\end{equation}
Hence
\begin{equation}\label{eq:dt}
\|\dt u\|_{\mathcal{M}}\leq C(\La).
\end{equation}

\vskip0.3cm
\noindent{\bf Step 2.}
Assume that Lemma \ref{lemm:second} is wrong. Then there exists a sequence of functions $\{u_k\}_{k=1}^{\infty}$ on $[-2,2]\times B(1)$, with each $u_k$ verifies (\ref{eq:gradient}), (\ref{eq:dt}) and the following three estimates.

\begin{equation}
\begin{split}
\int_{ [0,2]\times B(1)}[u_k-1]_+ & \geq \frac{\delta}{2}, \\
\big |\{ (t,x) \in [-2,2]\times B(1) : u_k(t,x)\leq 0\} \big | & \geq \frac{|[-2,2]\times B(1)|}{2}, \\
\big |\{ (t,x) \in [-2,2]\times B(1) : 0 < u_k(t,x) < 1\}  \big | & \leq \frac{1}{k}.
\end{split}
\end{equation}
From step 1, there exists a subsequence $\{u_{k_n}\}_{n=1}^{\infty}$ such that $u_{k_n}$ converges to $\bu$ in $L^1 ([-2,2]\times B(1))$, where the limiting function $\bu$ still verifies the following properties.
\begin{eqnarray*}
&&\bu\leq 2\qquad \mathrm{in}\ \ [-2,2]\times B(1),\\
&&\int_{B(1)}\bu_+(t,x)\,dx \leq \int_{B(1)}\bu_+(s,x)\,dx+(t-s)\La |B(1)|,\qquad -2\leq s\leq t\leq 2,\\
&&\int_{[-2,2]\times B(1)}|\nabla \bu_+|^p\,dx\,ds \leq C(\La),\\
&& \int_{[0,2]\times B(1)}[\bu-1]_+\geq \frac{\delta}{2}.
\end{eqnarray*}
Observe that, the following estimate holds for any $\eps>0$,
$$
\big |\{ |u_{k_n}-\bu|\geq\eps \}  \big |\leq \frac{1}{\eps}\int_{[-2,2]\times B(1)}|u_{k_n}-\bu|,
$$
So, it follows that for each fixed $\eps > 0$, the term which appears in the left hand side of the above estimate converges to 0 when $k_n$ goes to infinity.
Now we have the following obvious estimate.
\begin{equation}
\begin{split}
\big |\{\bu\leq\eps\} \big | & \geq \big |\{u_{k_n}\leq 0\}  \big |-
\big |\{|u_{k_n}-\bu|\geq\eps\}  \big |\\
& \geq \frac{\big |[-2,2]\times B(1)\big | }{2}- \big |\{|u_{k_n}-\bu|\geq\eps\} \big |.
\end{split}
\end{equation}
Through passing into the limit on the above estimate as $k_n$ goes to infinity, we get
$$
\big |\{\bu\leq\eps\} \big |\geq \frac{ \big | [-2,2]\times B(1) \big |}{2} ,
$$
which is true for any $\eps>0$. So, by passing to the limit on the above estimate by letting $\eps \rightarrow 0^+$, it follows that we have
$$
\big | \{\bu\leq0\} \big | \geq \frac{\big | [-2,2]\times B(1) \big |}{2}.
$$
In the same way, we have,
\begin{equation*}
\begin{split}
\big | \{\eps\leq \bu\leq1-\eps\} \big | & \leq \big | \{0<u_{k_n}<1\} \big |+ \big | \{|u_{k_n}-\bu|\geq\eps\}\big | \\
& \leq \frac{1}{k_n} + \frac{1}{\eps}\int_{[-2,2]\times B(1)}|u_{k_n}-\bu| .
\end{split}
\end{equation*}
By passing to the limit on the above estimate as $k_n\rightarrow \infty$, we can deduce that the following relation holds for every $\eps > 0$
\begin{equation*}
\big | \{\eps\leq \bu\leq1-\eps\} \big | = 0 ,
\end{equation*}
from which it follows, through taking $\eps \rightarrow 0^+$, that the following relation holds.

\begin{equation}\label{interesting}
 \big |\{ (t,x) \in [-2,2]\times B(1) : 0 < \bu (t,x) <1 \} \big | =0.
\end{equation}

\vskip0.3cm
\noindent{\bf Step 3.}
Now, we observe that, for almost every $t\in [-2,2]$ ,
$$
\int_{B(1)}|\nabla \bu|^p(t,x)\,dx
$$
is finite. Also, \eqref{interesting} tells us that the following relation holds for almost every $t\in [-2,2]$
$$
\big |\{0< \bu(t,\cdot)<1\}\cap B(1) \big | = 0,
$$
So, by an application of the isoperimetric lemma of De Girogi (with fixed time $t$), it follows that, for almost every $t\in [-2,2]$, we have either
\begin{eqnarray*}
&&\bu(t,\cdot)\leq 0\qquad \mathrm{in} \ \ B(1),\\
&& \mathrm{or}\qquad \qquad \bu(t,\cdot)\geq1\qquad \mathrm{in} \ \ B(1).
\end{eqnarray*}
Especially, for almost every $t \in [-2,2]$, we have either
$$
\int_{B(1)}\bu_+(t,x)\,dx=0
$$
or
$$
\int_{B(1)}\bu_+(t,x)\,dx\geq |B(1)|.
$$
\vskip0.3cm
\noindent{\bf Step 4.}
Since
$$
 |\{\bu\leq0\}|\geq \frac{\big |[-2,2]\times B(1) \big| }{2},
$$
there exists $s \in [-2,0]$ for which the following relation holds
$$
\int_{B(1)}\bu_+(s,x)\,dx=0.
$$
Consider $s_0\in [s,2]$ to be the supremum of all such time $\bar{s} \in [s,2]$ which satisfies the following property
$$
\int_{B(1)}\bu_+ (\tau ,\cdot )\,dx=0 \qquad \mathrm{for} \ \ \tau \in [s,\bar{s}).
$$
If it happens that $s_0<2$, then for $t\in (s_0,s_0+ \frac{1}{2\Lambda}] \cap [-2,2]$, we have
$$
\int_{B(1)}\bu_+(t,x)\,dx\leq \Lambda |B(1)| (t-s_0)\leq \frac{|B(1)|}{2}.
$$
But then the above estimate will lead to the following relation
$$
\int_{B(1)}\bu_+(t,x)\,dx=0 ,
$$
which holds for all $t\in (s_0,s_0+ \frac{1}{2\Lambda}] \cap [-2,2]$. This directly contradicts the definition of $s_0$. This means that we have no choice but to admit that $s_0$ must be $2$. However, $s_0 = 2$ would mean that $\bu \leq 0$ holds on $[s,2]\times B(1)$, which however directly contradicts the fact that we should have

$$
 \int_{[0,2]\times B(1)}[\bu-1]_+\geq \frac{\delta}{2}.
$$
This ends the proof.
\end{proof}
\section{Improved oscillations from above and below}\label{section:oscillation}

Let $\delta = \delta (N, \Lambda , p) > 0$, and $\alpha = \alpha (N, \Lambda , p) >0 $ to be the two absolute constants which are specified in Lemma \ref{lemm:first} and Lemma \ref{lemm:second} respectively. Now, we consider the integer $K_0 \in \mathbb{Z}^+$ which is defined as follows.

\begin{equation}\label{specialinteger}
K_0 = \bigg [ \frac{\big |[-2,2]\times B(1) \big |}{\alpha } \bigg ] + 1 ,
\end{equation}
where the symbol $[x]$ means the largest integer which is less than or equal to $x$. It is obvious that $K_0$ is an absolute constant which depends only on $N$, $\Lambda$, and $p$, since both $\delta$ and $\alpha$ do. Now, we consider the following two differential inequalities.
\begin{equation}\label{rescaledONE}
\dt u + \frac{2^{(K_0 + 1)(p-1)}}{\Lambda} \big |\nabla u \big |^p \leq \frac{\Lambda}{2^{K_0 +1}} .
\end{equation}
\begin{equation}\label{rescaledTWO}
\dt u + 2^{(K_0+1) (p-1)} \Lambda  \big |\nabla u \big |^p \geq 0 .
\end{equation}

Now, by applying Lemma \ref{lemm:second}, and then Lemma \ref{lemm:first} successively, we can now obtain the following proposition.

\begin{prop}\label{prop:above}
\textbf{(Improved oscillation from above)} There exists some absolute constant $\lambda = \lambda (N,\Lambda , p ) \in (0,1)$, such that for any weak solution $u :[-2, 2]\times B(1)\rightarrow \mathbb{R}$ to \eqref{rescaledONE} on $[-2,2]\times B(1)$, we have the following implication.\\

If it happens that $u \leq 2$ holds on $ [-2,2]\times B(1)$ and that $u$ satisfies the following property
$$
\big |\{u(t,x)\leq 0\} \cap ([-2,2]\times B(1)) \big |\geq \frac{\big |[-2,2]\times B(1) \big |}{2},
$$
then it follows that
$$
u \leq 2-\lambda \qquad \mathrm{on}\ \ \ [1,2]\times B(1).
$$
\end{prop}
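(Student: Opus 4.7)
The plan is a standard De Giorgi dyadic iteration at the top level. For $k \geq 0$, set $u_k = 2^k(u - 2) + 2$. Because $u \leq 2$, every $u_k \leq 2$, and since $\{u \leq 0\} \subseteq \{u_k \leq 0\}$ the lower-density hypothesis is inherited by each $u_k$. A direct substitution in \eqref{rescaledONE} gives
\[
\dt u_k + \frac{2^{(K_0+1-k)(p-1)}}{\Lambda}|\nabla u_k|^p \leq \frac{\Lambda}{2^{K_0+1-k}},
\]
so as long as $0 \leq k \leq K_0+1$, the coefficient of $|\nabla u_k|^p$ is at least $1/\Lambda$ and the right-hand side is at most $\Lambda$; hence each such $u_k$ satisfies the normalized inequality $\dt u_k + \frac{1}{\Lambda}|\nabla u_k|^p \leq \Lambda$ required by Lemmas \ref{lemm:first} and \ref{lemm:second}. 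This compatibility is the whole reason for the precise exponents in \eqref{rescaledONE}.

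Define $A_k = |\{(t,x)\in[-2,2]\times B(1):0<u_k(t,x)<1\}|$. In terms of $u$, these are the disjoint slabs $\{2-2^{1-k}<u<2-2^{-k}\}$, so $\sum_{k=0}^{K_0} A_k \leq |[-2,2]\times B(1)|$. The choice of $K_0$ in \eqref{specialinteger} forces $(K_0+1)\alpha > |[-2,2]\times B(1)|$, so pigeonholing produces some $k^* \in \{0,\ldots,K_0\}$ with $A_{k^*} < \alpha$. Applying Lemma \ref{lemm:second} to $u_{k^*}$ (all three hypotheses hold) yields $\int_{[0,2]\times B(1)}[u_{k^*}-1]_+ < \delta/2$.

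Since $u_{k^*+1} = 2(u_{k^*}-1)$, the previous bound rewrites as $\int_{[0,2]\times B(1)}(u_{k^*+1})_+ < \delta$, and Lemma \ref{lemm:first} applied to $u_{k^*+1}$ (still in the admissible range $k^*+1 \leq K_0+1$) gives $u_{k^*+1} \leq 1$ on $[1,2]\times B(1)$. Undoing the substitution, $u \leq 2 - 2^{-(k^*+1)} \leq 2 - 2^{-(K_0+1)}$ on $[1,2]\times B(1)$, so the conclusion holds with $\lambda = 2^{-(K_0+1)}$.

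The main (purely technical) obstacle is the rescaling bookkeeping: one must verify that the whole family $u_0,\ldots,u_{K_0+1}$ remains uniformly inside the normalized hypothesis class of the two earlier lemmas, including at the endpoints $k=0$ and $k=K_0+1$. This is precisely what the special exponent $(K_0+1)(p-1)$ and factor $2^{-(K_0+1)}$ in \eqref{rescaledONE} are engineered to guarantee; once that is in place, the rest is the classical De Giorgi pigeonhole-plus-two-lemmas oscillation reduction.
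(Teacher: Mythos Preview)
Your proof is correct and follows essentially the same route as the paper: the same dyadic family $u_k=2^k(u-2)+2$ (which the paper writes inductively as $u_k=2(u_{k-1}-1)$), the same verification that each $u_k$ with $0\le k\le K_0+1$ satisfies the normalized inequality of Lemmas~\ref{lemm:first}--\ref{lemm:second}, a pigeonhole to find a level with small intermediate set, then Lemma~\ref{lemm:second} followed by Lemma~\ref{lemm:first}, yielding $\lambda=2^{-(K_0+1)}$. The only cosmetic difference is that you phrase the pigeonhole via disjointness of the slabs $\{2-2^{1-k}<u<2-2^{-k}\}$, whereas the paper phrases it as accumulation of measure in $\{u_{K_0}\le 0\}$; these are equivalent.
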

\begin{proof}
Let $u :[-2,2 ]\times B(1)\rightarrow \mathbb{R}$ to be a weak solution to \eqref{rescaledONE} which satisfies all the hypothesis of Proposition \ref{prop:above}. For each integer $k\in \mathbb{Z}^+$ which satisfies $1 \leq k \leq K_0 + 1$, we consider the function $u_k : [-2,2]\times B(1) \rightarrow \mathbb{R}$ which is defined through the following relation in an inductive manner.
\begin{equation*}
u_k = 2 (u_{k-1} - 1 ) ,
\end{equation*}
where the function $u_0$ is just defined to be $u$ itself (i.e. $u_0 = u$). Then, inductively, it is easy to see that the following identity holds for each $k \in \{ 1,2,...K_0+1\}$.
\begin{equation*}
u_k = 2^k \big \{  u - 2 (1-\frac{1}{2^k})  \big \} .
\end{equation*}
By construction, it is obvious that, for each $1 \leq k \leq K_0 + 1 $ the relation $u_k \leq 2$ holds on $[-2,2]\times B(1)$.
Observe that the following relation holds for each $k \in \{1,2,...,K_0\}$.
\begin{equation}\label{usefulone}
\begin{split}
\big | \{(t,x) \in [-2,2]\times B(1) : u_k(t,x) \leq 0 \} \big | & \geq \big | \{(t,x) \in [-2,2]\times B(1) : u(t,x) \leq 0 \} \big | \\
& \geq \frac{\big | [-2,2]\times B(1) \big |}{2} .
\end{split}
\end{equation}
Inductively, it is apparent that each $u_k$ is a weak solution to the following differential inequality on $[-2,2]\times B(1)$
\begin{equation}\label{rescaledweaken1}
\dt u_k + \frac{2^{(p-1)(K_0 +1 -k)}}{\Lambda} \big | \nabla u_k\big |^p \leq \frac{\Lambda}{2^{K_0 + 1 - k}} .
\end{equation}
Next, we note that it is \emph{impossible} to have the following relation to be valid for all $k \in \{1,2,...,K_0\}$
\begin{equation*}
\big | \{ (t,x) \in [-2,2]\times B(1) : 0 < u_k(t,x) <1   \} \big | > \alpha ,
\end{equation*}
since if otherwise, the validity of the above relation for all $1 \leq k \leq K_0$ would lead to
\begin{equation*}
\begin{split}
\big |\{(t,x) \in [-2,2]\times B(1) : u_{K_0}(t,x) \leq 0 \} \big | & > \frac{\big | [-2,2]\times B(1)\big |}{2} + K_0 \alpha \\
& > \frac{3}{2} \big | [-2,2]\times B(1)\big | ,
\end{split}
\end{equation*}
which is absurd. This indicates that there must be some positive integer $j_0$ which satisfies $1 \leq j_0 \leq K_0$ for which the following relation holds.
\begin{equation}\label{usefultwo}
\big |\{(t,x) \in [-2,2]\times B(1) : 0 < u_{j_0}(t,x) <1 \} \big | \leq \alpha .
\end{equation}
Since $K_0 - j_0 \geq 0$, it follows that $u_{j_0}$ is also a weak solution to \eqref{basicineqONE} on $[-2,2]\times B(1)$. As a result, \eqref{usefulone} and \eqref{usefultwo} together enable us to apply Lemma \ref{lemm:second} directly to $u_{j_0}$ in order to deduce that the following property holds.
\begin{equation}\label{final}
\int_{[0,2]\times B(1)} (u_{j_0 +1})_{+} = 2 \int_{[0,2]\times B(1)} \big ( u_{j_0} - 1 \big )_{+} \leq \delta .
\end{equation}
Since $u_{j_0 +1}$ satisfies \eqref{basicineqONE}, \eqref{final} enables us to apply Lemma \ref{lemm:first} directly to $u_{j_0 +1}$ to deduce that
$$
u_{j_0 + 1} \leq 1 \qquad \mathrm{on}\ \ \ [1,2]\times B(1),
$$
which gives
$$
u \leq 2 - \frac{1}{2^{j_0 +1}} \leq 2 - \frac{1}{2^{K_0 +1}} \qquad \mathrm{on}\ \ \ [1,2]\times B(1).
$$
So, by taking $\lambda = \frac{1}{2^{K_0 +1}}$, the proof of Proposition \ref{prop:above} is complete.
\end{proof}

Now we want to show the following proposition.
\begin{prop}\label{prop:below}\textbf{(Improved oscillation from below)} There exists absolute constants $\tilde{\lambda} = \tilde{\lambda}(N,\Lambda , p) \in (0,1)$, and $q = q (N, \Lambda , p)>0$ such that for any function $u : [-2, 2]\times \mathbb{R}^N \rightarrow \mathbb{R}$ which is a weak solution to \eqref{rescaledONE} on $[-2,2]\times B(1)$, and which simultaneously is a viscosity solution to \eqref{rescaledTWO} on $[-2,2]\times \mathbb{R}^N$, we have the following implication:\\
If $u$ satisfies the following properties
\begin{equation}\label{usuallower}
u  \geq  -2  \qquad \mathrm{on}\ \ \ [-2,2]\times B(1) .
\end{equation}
\begin{equation}\label{GOODARRAY}
\big |\{u(t,x)\geq 0\} \cap ([-2,2]\times B(1)) \big|\geq \frac{\big |[-2,2]\times B(1) \big |}{2},
\end{equation}
\begin{equation}\label{globallowerbarrier}
u(t,x)\geq -2-q(|x|-1)_+, \qquad {for} \ \ (t,x) \in [-2,2]\times \R^N,
\end{equation}
then it follows that
$$
u  \geq  -2+\tilde{\lambda} \qquad \mathrm{on}\ \ \ [1,2]\times B(\frac{1}{2}).
$$
\end{prop}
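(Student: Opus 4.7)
The proof naturally splits into two parts. First, a \emph{backward-in-time} argument: define $w(t,x):=-u(-t,x)$ and apply Proposition~\ref{prop:above} to $w$ to derive an improved lower bound on $u$ in the past sub-cylinder $[-2,-1]\times B(1)$. Second, a \emph{forward-in-time} argument: construct an explicit viscosity subsolution $\phi$ of $\dt v+c_2|\nabla v|^p=0$ (with $c_2:=2^{(K_0+1)(p-1)}\La$) that sits below $u$ at time $t=-1$, and then invoke the comparison principle for viscosity sub/super-solutions of Hamilton--Jacobi equations to conclude $u\geq\phi$ on $[1,2]\times B(1/2)$.

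For Step~1, the substitution $t\mapsto -t$ leaves $|\nabla u|^p$ unchanged while the two minus signs cancel on $\dt u$, so the weak subsolution structure \eqref{rescaledONE} is preserved by $w$. Hypotheses \eqref{usuallower}--\eqref{GOODARRAY} become $w\leq 2$ and $\bigl|\{w\leq 0\}\cap([-2,2]\times B(1))\bigr|\geq\tfrac12|[-2,2]\times B(1)|$, so Proposition~\ref{prop:above} applied to $w$ yields $w\leq 2-\lambda$ on $[1,2]\times B(1)$, i.e.\ $u\geq -2+\lambda$ on $[-2,-1]\times B(1)$.

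For Step~2, let $p':=p/(p-1)$ and $\gamma:=(p-1)\bigl(p(pc_2)^{1/(p-1)}\bigr)^{-1}$ (the Legendre-transform constant for $c_2|P|^p$). Choose
$$q:=\tfrac12\,\gamma\,p'\,(1/2)^{p'-1}\cdot 3^{-1/(p-1)},\qquad \tilde\lambda:=\min\bigl\{\lambda,\;\gamma(1/2)^{p'}\cdot 3^{-1/(p-1)}\bigr\},$$
both depending only on $(N,\La,p)$. Define $\phi_0(y):=-2+\lambda$ if $|y|\leq 1$ and $\phi_0(y):=-2-q(|y|-1)$ if $|y|>1$; by Step~1 and \eqref{globallowerbarrier} (both read with this same constant $q$), $\phi_0\leq u(-1,\cdot)$ on $\R^N$. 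Let $\phi$ be the Hopf--Lax envelope
$$\phi(t,x):=\inf_{y\in\R^N}\bigl[\phi_0(y)+\gamma|x-y|^{p'}(t+1)^{-1/(p-1)}\bigr],$$
a viscosity subsolution of $\dt v+c_2|\nabla v|^p=0$ on $(-1,2]\times\R^N$ emanating from $\phi_0$. For $(t,x)\in[1,2]\times B(1/2)$, splitting the infimum over $|y|\leq 1$ (value $\geq-2+\lambda$, attained at $y=x$) and over $|y|>1$ (parametrize $y=(1+s)\hat x$ for $s>0$: our choice of $q$ makes the $s$-derivative $-q+\gamma p'(1+s-|x|)^{p'-1}(t+1)^{-1/(p-1)}$ nonnegative at $s=0$ with positive second derivative, so the infimum is pinned as $s\to 0^+$ with value $-2+\gamma(1-|x|)^{p'}(t+1)^{-1/(p-1)}\geq -2+\gamma(1/2)^{p'}\cdot 3^{-1/(p-1)}$) yields $\phi\geq -2+\tilde\lambda$ on $[1,2]\times B(1/2)$. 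Finally, the standard comparison principle for viscosity sub/super-solutions of $\dt v+c_2|\nabla v|^p=0$ on $[-1,2]\times\R^N$ --- applicable by virtue of the at-most-linear one-sided growth of both $\phi$ (by construction) and $u$ (by \eqref{globallowerbarrier}) at infinity --- delivers $u\geq\phi$, hence the conclusion.

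\textbf{Main obstacle.} The essential tension is in calibrating the single constant $q$: it must \emph{equal} the slope in \eqref{globallowerbarrier} so that $\phi_0\leq u(-1,\cdot)$ outside $B(1)$, yet be \emph{small} compared to $\gamma p'(1/2)^{p'-1}\cdot 3^{-1/(p-1)}$ so that the superlinear Hopf--Lax penalty $\gamma|x-y|^{p'}$ dominates the linear decay $q(|y|-1)$ of $\phi_0$ and keeps the minimizer pinned near $|y|=1$. The superlinearity $p'>1$ of the dual Lagrangian (equivalently $p>1$) is exactly what makes this compatibility possible. This comparison step is also the only place where the viscosity (as opposed to merely distributional) structure of $u$ enters the proof; Step~1 uses only the weak form of \eqref{rescaledONE}.
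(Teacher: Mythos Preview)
Your two-step architecture---reflect in time and apply Proposition~\ref{prop:above}, then push the gain forward via a comparison subsolution---is exactly the paper's strategy, and your execution is correct. The only substantive difference lies in Step~2: the paper does not use the Hopf--Lax envelope but instead writes down an elementary barrier
\[
\psi(t,x)=\min\Bigl(-2+\lambda_1,\; -2-\tfrac{\lambda_1}{8}(t+2)+q\,(1-|x|)\Bigr),\qquad q=\Bigl(\tfrac{\lambda_1}{8\La\,2^{(p-1)(K_0+1)}}\Bigr)^{1/p},
\]
which is a viscosity solution of $\dt\psi+c_2|\nabla\psi|^p=0$ because each branch is (one is constant, the other has $|\nabla\psi|=q$ and $\dt\psi=-\lambda_1/8=-c_2 q^p$), and the minimum of solutions is a solution here. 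Comparison is run from $t=-2$ rather than $t=-1$. The calibration issue you single out is handled differently: the paper fixes $q$ by the formula above and then shrinks $\lambda_1\in(0,\lambda)$ until $2\lambda_1<(\lambda_1/(8c_2))^{1/p}$, which is possible precisely because $p>1$ makes $\lambda_1^{1/p}/\lambda_1\to\infty$; you instead keep $\lambda$ and shrink $q$, exploiting the dual fact that the Lagrangian is superlinear ($p'>1$). Both routes encode the same scaling phenomenon.

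Your Hopf--Lax construction is more systematic and makes the role of the Lagrangian transparent, at the cost of invoking slightly heavier machinery (well-posedness of the Hopf--Lax formula with merely semicontinuous data at $|y|=1$, and a comparison principle on all of $\R^N$ for unbounded sub/super-solutions). The paper's barrier is cruder but entirely self-contained: $\psi$ is explicit, globally Lipschitz, and the verification that $\psi(-2,\cdot)\le u(-2,\cdot)$ and $\psi\ge -2+\lambda_1/2$ on $[1,2]\times B(1/2)$ is a two-line computation. Either approach yields the result; your identification of the comparison step as the sole place where the viscosity structure enters matches the paper's emphasis.
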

\begin{proof}
Let $u : [-2,2]\times \mathbb{R}\rightarrow \mathbb{R}$ to be a viscosity solution to \eqref{rescaledTWO} on $[-2,2]\times \mathbb{R}^N$ which is also a weak solution to \eqref{rescaledONE} on $[-2,2]\times B(1)$ and which satisfies conditions \eqref{usuallower} and \eqref{GOODARRAY}. Condition \eqref{globallowerbarrier} will eventually be imposed on our solution $u$. But we do not do so at this moment, simply due to the fact that the absolute constant $q= q (N,\Lambda , p) > 0$ is not specified yet.

The function $v: [-2,2]\times \mathbb{R}^N\rightarrow \mathbb{R}$ defined by
$$
v(t,x)=-u(-t,x)
$$
is not a viscosity solution to \eqref{rescaledTWO} anymore, but it still verifies \eqref{rescaledONE} in the sense of distribution.
Condition \eqref{GOODARRAY} is equivalent to
\begin{equation}\label{aboutvone}
\big |\{v(t,x)\leq 0\} \cap ([-2,2]\times B(1)) \big|\geq \frac{\big |[-2,2]\times B(1) \big |}{2}.
\end{equation}
Also, condition \eqref{usuallower} gives that $v \leq 2$ holds on $[-2,2]\times B(1)$.
Hence, we can directly apply Proposition \ref{prop:above} to deduce that,
$$
v\leq 2-\lambda \qquad \mathrm{on} \ \ [1,2]\times B(1),
$$
which is equivalent to,
$$
u\geq-2+\lambda\qquad  \mathrm{on} \ \  [-2,-1]\times B(1).
$$
Here, $\lambda \in (0,1)$ is the absolute constant which is specified in proposition \ref{prop:above}.
With respect to some positive number $\lambda_1 \in (0, \lambda )$ which will be determined later, consider the function $\psi : [-2,2]\times \mathbb{R}^N \rightarrow \mathbb{R}$ which is defined as follows.
$$
\psi(t,x)=\inf\left(-2+\lambda_1; -2-\frac{\lambda_1}{8}(t+2) + \left(\frac{\lambda_1}{8\La \cdot 2^{(p-1)(K_0 + 1)}}\right)^{1/p}\big (1-|x| \big )\right).
$$
The function $\psi$ is a viscosity solution to
$$
\dt \psi +2^{(K_0 +1)(p-1)}  \Lambda|\nabla \psi |^p=0, \qquad \mathrm{on} \ \  [-2,2]\times \mathbb{R}^N.
$$
Now, suppose further that $u$ satisfies the additional condition \eqref{globallowerbarrier} with $q$ to be defined as follows.
\begin{equation}\label{q}
q = \left(\frac{\lambda_1}{8\La \cdot 2^{(p-1)(K_0 + 1)}}\right)^{1/p} .
\end{equation}
Then, it follows that the relation $\psi(-2,\cdot )\leq u(-2,\cdot )$ holds on $\R^N$. Hence, in accordance with basic comparison principle in the theory of viscosity solutions, it follows that the following relation holds for all $(t,x)\in [-2,2]\times \mathbb{R}^N$
$$
u(t,x)\geq \psi(t,x),
$$
from which it follows that the following relation holds
\begin{equation}
u \geq \inf \bigg ( -2 + \lambda_1 ;  -2 - \frac{\lambda_1}{2} + \frac{1}{2}\left(\frac{\lambda_1}{8\La \cdot 2^{(p-1)(K_0 + 1)}}\right)^{1/p} \bigg ) \qquad \mathrm{on} \ \  [1,2]\times B(\frac{1}{2}) .
\end{equation}
In the case of $p \in (1, N)$, since
\begin{equation*}
\lim_{\lambda_1 \rightarrow 0^+} \left \{ \left(\frac{\lambda_1}{8\La \cdot 2^{(p-1)(K_0 + 1)}}\right)^{1/p}\cdot \frac{1}{\lambda_1} \right \} = + \infty ,
\end{equation*}
it follows that we may choose $\lambda_1 \in (0, \lambda )$ to be sufficiently small so that the following relation holds
\begin{equation}\label{constraintonlambda}
2\lambda_1 <   \left(\frac{\lambda_1}{8\La \cdot 2^{(p-1)(K_0 + 1)}}\right)^{1/p} .
\end{equation}
With respect to such a $\lambda_1 \in (0,\lambda )$ satisfying \eqref{constraintonlambda}, we deduce that the following improved oscillation holds, provided $u$ satisfies \eqref{GOODARRAY}, and \eqref{globallowerbarrier} with $q > 0$ to be the constant specified in \eqref{q}.
\begin{equation*}
u \geq \inf \big ( -2 + \lambda_1 ; -2 + \frac{\lambda_1}{2} \big ) = -2 + \frac{\lambda_1}{2} \qquad \mathrm{on} \ \  [1,2]\times B(\frac{1}{2}) .
\end{equation*}
So, by taking $\tilde{\lambda} = \frac{\lambda_1}{2}$,
the proof of proposition \ref{prop:below} is now completed.

\end{proof}
\section{Final rescaling}\label{section:final}

Indeed, by means of a simple re-scaling argument, it is easy to see that Theorem \ref{theo:main} will follow from the following proposition, which we are going to prove in this final section.

\begin{prop}\label{last}
Consider $u :[-4,0]\times \mathbb{R}^N \rightarrow \mathbb{R}$ to be a weak solution to \eqref{basicineqONE} on $[-4,0]\times B(1)$, which simultaneously is a viscosity solution to \eqref{superviscosity} on $[-4,0]\times \mathbb{R}^N$. suppose that $|u| \leq 2$ holds on $[-4,0]\times \mathbb{R}^N$. Then, it follows that $u(0, \cdot ) \in C^{\alpha}(\mathbb{R}^N)$ holds, with some $\alpha \in (0,1)$ which depends only on $N$, $\Lambda$, and $p$.
\end{prop}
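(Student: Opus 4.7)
The plan is to show that $u(0,\cdot)$ is H\"older continuous at any given point $x_0\in\R^N$; since equations \eqref{basicineqONE}, \eqref{superviscosity}, and the bound $|u|\leq 2$ are all translation invariant, I may take $x_0=0$. I will construct a sequence of parabolic-type cylinders $Q_k\ni(0,0)$ with spatial radii shrinking geometrically, together with constants $M_k$ with $\osc_{Q_k}u\leq M_k$ also decreasing geometrically. The ratio of the two decay rates gives the H\"older exponent at $(0,0)$, and translation invariance extends the pointwise estimate to all of $\R^N$.

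At each step $k$, I rescale $u$ to the standard cylinder by defining $w_k(t,x)=A_k(u(\tau_k t,r_k x)-a_k)$, where $a_k$ centers $u$ on $Q_k$, $A_k=4/M_k$ is chosen so that $|w_k|\leq 2$ on $[-4,0]\times B(1)$, and $\tau_k,r_k$ are chosen so that $w_k$ is a weak subsolution of \eqref{rescaledONE} on $[-4,0]\times B(1)$ and a viscosity supersolution of \eqref{rescaledTWO} on $[-4,0]\times\R^N$. A direct computation relating $A_k,\tau_k,r_k$ to the coefficients in \eqref{basicineqONE} and \eqref{superviscosity} shows that enforcing both rescaled inequalities simultaneously pins the rescaling factors to the sharp balance $A_k r_k=2^{-(K_0+1)}$ and $\tau_k=4r_k$. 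Either $|\{w_k\leq 0\}|$ or $|\{w_k\geq 0\}|$ is at least half of $|[-4,0]\times B(1)|$; in the first case I apply Proposition \ref{prop:above}, in the second Proposition \ref{prop:below} (which also uses the global lower barrier \eqref{globallowerbarrier} preserved through the iteration). Either way I conclude $\osc w_k\leq 4-\mu$ on $[-1,0]\times B(1/2)$ for $\mu=\min(\lambda,\tilde\lambda)>0$ universal, which translates back to $\osc u\leq(1-\mu/4)M_k$ on a sub-cylinder of $Q_k$.

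The main technical obstacle is the tension between the rescaling constraints and the geometric containment that would allow the next step. The sharp balance dictates $r_{k+1}/r_k=M_{k+1}/M_k$, while the containment of $Q_{k+1}=[-4r_{k+1},0]\times B(r_{k+1})$ inside the improved-oscillation region $[-r_k,0]\times B(r_k/2)$ demands $r_{k+1}/r_k\leq 1/4$; however, a single application of the improved oscillation only yields $M_{k+1}/M_k\leq 1-\mu/4$, which is too close to $1$. I resolve this by iterating the improved oscillation step several times in succession at a fixed spatial scale before rescaling: after $J$ iterations with $J$ chosen so that $(1-\mu/4)^J\leq 1/4$, the oscillation has decreased enough to justify a next-scale rescaling with $r_{k+1}=r_k/4$ while preserving the sharp balance. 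Maintenance of the global lower barrier \eqref{globallowerbarrier} is a secondary technical point: outside the unit ball in step-$k$ coordinates, $w_k$ is only controlled by the ambient bound $|u|\leq 2$ amplified by the factor $A_k$, and the constant $q$ in Proposition \ref{prop:below} is chosen large enough (consistent with the sharp balance $A_k r_k=2^{-(K_0+1)}$) to dominate this amplification at every scale.

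Iterating yields $\osc_{Q_k}u\leq M_0\gamma^k$ on cylinders $Q_k$ of spatial radius $r_0\sigma^k$ for some $\gamma,\sigma\in(0,1)$ depending only on $N$, $\Lambda$, $p$. This is precisely the H\"older condition at $(0,0)$ with exponent $\alpha=\log(1/\gamma)/\log(1/\sigma)\in(0,1)$, and combined with the translation argument gives $u(0,\cdot)\in C^\alpha(\R^N)$ with a uniform estimate, as required.
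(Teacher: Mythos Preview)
Your overall strategy is right, but the central computation---what you call the ``sharp balance''---is misidentified, and this is a genuine gap. Writing $w(t,x)=A\bigl(u(\tau t,rx)-a\bigr)$, the subsolution inequality \eqref{rescaledONE} requires $\dfrac{A\tau}{(Ar)^p}\geq 2^{(K_0+1)(p-1)}$ and $A\tau\leq 2^{-(K_0+1)}$, while the supersolution inequality \eqref{rescaledTWO} requires $\dfrac{A\tau}{(Ar)^p}\leq 2^{(K_0+1)(p-1)}$. Together these give the single \emph{equality} $\tau=2^{(K_0+1)(p-1)}A^{p-1}r^p$ and the \emph{inequality} $Ar\leq 2^{-(K_0+1)}$; they do \emph{not} force $A_kr_k=2^{-(K_0+1)}$ or $\tau_k\propto r_k$. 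With $A=4/M_k$ fixed, $r$ remains a free parameter, and $\tau\propto r^p$. The paper exploits precisely this freedom: it takes the spatial shrinking factor $\varepsilon_1=r_{k+1}/r_k=\bigl(\tfrac{4-\tilde\lambda}{4}\bigr)^{(p-1)/(p-\alpha_1)}$ with $\alpha_1\in(1,p)$ close to $p$, so that $\varepsilon_1$ is as small as one likes while the equality constraint is preserved. This simultaneously yields the containment $[-4\varepsilon_1^{\alpha_1},0]\times B(\varepsilon_1)\subset[-1,0]\times B(1/2)$ and the barrier bound \eqref{globallowerbarrier}, since $\tfrac{4}{4-\tilde\lambda}\varepsilon_1<1$ makes the amplified previous-step barrier grow strictly slower than $2+q(|x|-1)_+$.

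Your proposed fix---iterating the oscillation step $J$ times ``at a fixed spatial scale''---does not work. Each application of Propositions~\ref{prop:above} or~\ref{prop:below} shrinks the output region to $[-1,0]\times B(1/2)$; to reapply, you must rescale $(t,x)\mapsto(t/4,x/2)$ and amplify by $4/(4-\mu)$, and a direct check shows the resulting coefficient $\dfrac{A'\tau'}{(A'r')^p}=\dfrac{(4-\mu)^{p-1}}{2^p}\cdot 2^{(K_0+1)(p-1)}$, which is \emph{not} equal to $2^{(K_0+1)(p-1)}$ in general, so one of \eqref{rescaledONE}, \eqref{rescaledTWO} fails after the first iterate. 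Finally, your remark that ``$q$ is chosen large enough'' is backwards: $q$ is a fixed small constant determined inside the proof of Proposition~\ref{prop:below}, not a parameter at your disposal; the barrier is instead accommodated by choosing $\varepsilon_1$ small, which is exactly the degree of freedom you overlooked.
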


\begin{proof}
The proof of proposition \ref{last} will be carried out through several steps as follows.\\

\noindent{\bf Step 1: Initial re-scaling}\\

Let $u :[-4,0]\times \mathbb{R}^N \rightarrow \mathbb{R}$ to be a function which is a weak solution to \eqref{basicineqONE} on $[-4,0]\times B(1)$, and which simultaneously is a viscosity solution to \eqref{superviscosity} on $[-4,0]\times \mathbb{R}^N$. We will assume, without the loss of generality, that $|u| \leq 2$ holds on $[-4,0]\times \mathbb{R}^N$. In order to use either improved oscillation from above or below by means of Propositions \ref{prop:above} or \ref{prop:below}, we need to re-scale our function $u$ so that the re-scaled function will satisfies \eqref{rescaledONE} in the weak sense and \eqref{rescaledTWO} in the viscosity sense. Moreover, with respect to some parameters $\varepsilon \in (0,1)$, and $\alpha \in [1, p )$ which will be determined later, we consider the function $u_1 : [\frac{-4}{\varepsilon^{\alpha} } , 0 ]\times \mathbb{R}^N \rightarrow \mathbb{R}$ defined as follows (This index $\alpha \in [1,p)$ \emph{absolutely has nothing to do} with the previous absolute constant $\alpha(N, \Lambda, p) > 0$ which appears in Lemma \ref{lemm:second}).
\begin{equation*}
u_1(t,x) = u(\varepsilon^{\alpha} t , \varepsilon x ) .
\end{equation*}
Notice that $u_1$ is then a weak solution to the following inequality on $ [\frac{-4}{\varepsilon^{\alpha} } , 0 ]\times B(\frac{1}{\varepsilon}) $
\begin{equation}\label{U1weak}
\dt u_1 + \frac{1}{\Lambda \varepsilon^{p-\alpha } } \big | \nabla u  \big |^p \leq \Lambda \varepsilon^{\alpha}.
\end{equation}
At the same time $u_1$ is also a viscosity solution to the following inequality on $ [\frac{-4}{\varepsilon^{\alpha} } , 0 ]\times \mathbb{R}^N$.
\begin{equation}\label{U1viscosity}
\dt u_1 + \frac{\Lambda}{\varepsilon^{p-\alpha }}  \big | \nabla u  \big |^p \geq 0 .
\end{equation}
Now, we simply notice that
\begin{equation*}
\big \{\frac{1}{\varepsilon^{p-\alpha}} : \varepsilon \in (0,1) , \alpha \in [1, p ) \big \} = (1, + \infty ) ,
\end{equation*}
which immediately ensures that we can find a suitable pair $(\varepsilon , \alpha )$ with $\varepsilon \in (0,1)$ and $\alpha \in [1, p )$ for which the following required property holds
\begin{equation}\label{required}
\frac{1}{\varepsilon^{p-\alpha }} = 2^{(K_0+1)(p-1)} .
\end{equation}
So, with respect to such a pair of $(\varepsilon , p)$ satisfying \eqref{required}, since $\frac{p-1}{p-\alpha} \geq 1$, we clearly have
\begin{equation*}
\varepsilon^{\alpha} = \frac{1}{2^{(K_0 +1)(\frac{p-1}{p-\alpha}) \alpha }} \leq \frac{1}{2^{K_0 +1 }} .
\end{equation*}
Hence, it follows from \eqref{U1weak} and \eqref{U1viscosity} that $u_1$ is a weak solution to \eqref{rescaledONE} on $ [\frac{-4}{\varepsilon^{\alpha} } , 0 ]\times B(\frac{1}{\varepsilon}) $, which simultaneously is also a viscosity solution to \eqref{rescaledTWO} on $ [\frac{-4}{\varepsilon^{\alpha} } , 0 ]\times \mathbb{R}^N$. Since we still have $|u_1| \leq 2$ on $ [\frac{-4}{\varepsilon^{\alpha} } , 0 ]\times \mathbb{R}^N$, we can directly apply either Propositions \ref{prop:above} or \ref{prop:below} to yield either $u_1 \leq 2 - \tilde{\lambda}$ on
$[-1,0]\times B(\frac{1}{2})$ or $u_1 \geq -2 + \tilde{\lambda} $ on $[-1 ,0]\times B(\frac{1}{2})$.\\
In either case, we obtain the following improved oscillation
\begin{equation}
\osc_{[-1,0]\times B(\frac{1}{2})} u_1 \leq 4 - \tilde{\lambda}.
\end{equation}
\noindent{\bf Step 2: second re-scaling and improved oscillation at the scale of $\varepsilon_1$ }\\

The situation looks so far so good. However, we have to be more careful in carrying out the second re-scaling. First, due to the above improved oscillation of $u_1$ on $[-1, 0]\times B(\frac{1}{2})$, we can find some $d_1 \in \mathbb{R}$ with $|d_1| \leq \frac{\tilde{\lambda}}{2}$ such that the following relation $|u_1 - d_1| \leq 2 - \frac{\tilde{\lambda}}{2}$ holds on $[-1,0]\times B(\frac{1}{2})$. Just as before, with respect to some
$\varepsilon_1 \in (0,1)$ and $\alpha_1 \in [1 ,p )$ which have to be determined later, we need to consider the re-scaled function
$u_2$ defined as follows

\begin{equation}\label{utwo}
u_2 (t,x) = \frac{4}{4-\tilde{\lambda}}\bigg \{ u_1(\varepsilon_1^{\alpha_1} t , \varepsilon_1 x ) - d_1 \bigg \} .
\end{equation}

Observe that $u_2$ is then a weak solution to the following inequality on $ [\frac{-4}{\varepsilon^{\alpha} \varepsilon_1^{\alpha_1} } , 0 ] \times    B(\frac{1}{\varepsilon \varepsilon_1})$
\begin{equation}\label{reallyGOOD}
\dt u_2 + \frac{2^{(K_0 + 1) (p-1)} }{\Lambda \varepsilon_1^{p-\alpha_1}}\big ( \frac{4-\tilde{\lambda}}{4} \big )^{p-1} \big | \nabla u_2\big |^p \leq \frac{\Lambda}{2^{K_0 +1}} \epsilon_1^{\alpha_1} \big ( \frac{4}{4-\tilde{\lambda}} \big ).
\end{equation}


Also, $u_2$ is a viscosity solution to the following inequality on $ [\frac{-4}{\varepsilon^{\alpha} \varepsilon_1^{\alpha_1} } , 0 ] \times \mathbb{R}^N$
\begin{equation}\label{reallyBAD}
\dt u_2 + \Lambda 2^{(K_0 +1) (p-1)} \frac{1}{\varepsilon_1^{p-\alpha_1 }}\big ( \frac{4-\tilde{\lambda}}{4} \big )^{p-1} \big | \nabla u_2\big |^p \geq 0 ,
\end{equation}


In light of the structure of \eqref{reallyBAD}, it is natural to take $\varepsilon_1$ as follows, with some suitable $\alpha_1 \in (1,p)$ which will be determined later.
\begin{equation}\label{goodepsilonONE}
\varepsilon_1 = \bigg ( \frac{4-\tilde{\lambda}}{4}\bigg )^{\frac{p-1}{p-\alpha_1}}
\end{equation}
With this choice of $\epsilon_1$ as specified in \eqref{goodepsilonONE}, we surely have
\begin{equation*}
\frac{1}{\varepsilon_1^{p-\alpha }}\big ( \frac{4-\tilde{\lambda}}{4} \big )^{p-1} = 1 .
\end{equation*}
and that
\begin{equation*}
\epsilon_1^{\alpha_1} \big ( \frac{4}{4-\tilde{\lambda}} \big ) = \bigg ( \frac{4-\tilde{\lambda}}{4} \bigg )^{\frac{p(\alpha_1 -1)}{p-\alpha_1}} < 1 ,
\end{equation*}
since $\alpha_1 \in (1, p)$ ensures that $\frac{p(\alpha_1 -1)}{p-\alpha_1} > 0$.
These observations tell us that, as long as $\epsilon_1$ is given by \eqref{goodepsilonONE}, it follows from \eqref{reallyGOOD} and \eqref{reallyBAD} that $u_2$ is still a weak solution to \eqref{rescaledONE} on $ [\frac{-4}{\varepsilon^{\alpha} \varepsilon_1^{\alpha_1} } , 0 ] \times    B(\frac{1}{\varepsilon \varepsilon_1})$, which simultaneously is also a viscosity solution to \eqref{rescaledTWO} on $ [\frac{-4}{\varepsilon^{\alpha} \varepsilon_1^{\alpha_1} } , 0 ] \times \mathbb{R}^N$. Next, we have to choose a suitable $\alpha_1 \in (1,p)$, in a way which depends only on $N, \Lambda$ and $p$, to ensure that $|u_2|$ is entirely within the barrier function $2+ q(|x| - 1)_+  $ (Recall that $q = q(N, \Lambda , p ) > 0$ is the absolute constant which appears in proposition \ref{prop:below} ). \\

Since, due to construction, $u_2$ must satisfies $|u_2| \leq 2$ on $[-\frac{1}{\varepsilon_1^{\alpha_1}} , 0] \times B(\frac{1}{2\varepsilon_1})$, we only need to check that the following property really holds on $[-\frac{1}{\varepsilon_1^{\alpha_1}} , 0] \times \mathbb{R}^N$
\begin{equation}\label{check1}
|u_2(t,x)| \chi_{\{ |x| \geq \frac{1}{2 \varepsilon_1} \}} \leq  \big ( 2+ q(|x| - 1)_+ \big ) \chi_{\{ |x| \geq \frac{1}{2 \varepsilon_1}\} }.
\end{equation}
However, from \eqref{utwo}, it is clear that the following relation holds on $[-\frac{1}{\varepsilon_1^{\alpha_1}} , 0] \times \mathbb{R}^N$
\begin{equation*}
|u(t,x)| \chi_{\{|x| \geq \frac{1}{2 \varepsilon_1} \}} \leq \bigg ( \frac{4}{4-\tilde{\lambda} }\bigg )\bigg \{ 2 + \frac{\tilde{\lambda}}{2} \bigg \} \chi_{\{|x| \geq \frac{1}{2 \varepsilon_1} \}}
\end{equation*}
So, in order to ensure the survival of \eqref{check1}, it is enough to check that the following estimate holds
\begin{equation}\label{check2}
\bigg ( \frac{4}{4-\tilde{\lambda} }\bigg )\bigg \{ 2 + \frac{\tilde{\lambda}}{2} \bigg \} \chi_{\{|x| \geq \frac{1}{2 \varepsilon_1} \}}
\leq \big ( 2+ q(|x| - 1)_+ \big ) \chi_{\{ |x| \geq \frac{1}{2 \varepsilon_1}\} }.
\end{equation}
Since $2+ q(|x| -1)_+$ is monotone increasing in $|x|$, \eqref{check2} is valid if and only if the following relation holds
\begin{equation}\label{check3}
\begin{split}
\bigg ( \frac{4}{4-\tilde{\lambda} }\bigg )\bigg \{ 2 + \frac{\tilde{\lambda}}{2} \bigg \} & \leq 2 + q \bigg ( \frac{1}{2 \varepsilon_1} -1 \bigg ) \\
& = 2 + q \bigg ( \frac{1}{2} \big (\frac{4}{4-\tilde{\lambda}} \big )^{\frac{p-1}{p-\alpha_1}} - 1 \bigg ) .
\end{split}
\end{equation}
Notice that the last equal sign is due to \eqref{goodepsilonONE} which relates $\varepsilon_1 \in (0,1)$ to $\alpha_1 \in [1,p)$. The key point is that we have the freedom to choose $\alpha_1 \in (1,p)$ to be as close to $p > 1$ as possible. Indeed, we observe that
\begin{equation*}
\lim_{\alpha_1 \rightarrow p^-} \big (\frac{4}{4-\tilde{\lambda}} \big )^{\frac{p-1}{p-\alpha_1}} = + \infty ,
\end{equation*}
which allows us to choose some suitable $\alpha_1 \in (1,p)$ to be sufficiently close to $p$, in a manner which depends only on $p$, $q$ and $\tilde{\lambda}$ (and hence only on $N$, $p$, and $\lambda$) such that relation \eqref{check3} must be valid and that the following relation must hold simultaneously
\begin{equation}\label{Check4}
\frac{1}{\varepsilon_1^{\alpha_1}} = \big (\frac{4}{4-\tilde{\lambda}} \big )^{\frac{p-1}{p-\alpha_1}\alpha_1} > 4 .
\end{equation}
Notice that \eqref{Check4} is here to ensure that $[-4,0] \subset [- \varepsilon_1^{-\alpha_1} , 0]$ holds.
Hence with such a $\alpha_1 = \alpha_1 (N,p, \Lambda )$ sufficiently close to $p$, relation \eqref{check1} must be valid, and hence the desired relation $|u_2| \leq 2 + q(|x| - 1 )_+ $ holds on $[-4 , 0]\times \mathbb{R}^N$. As a result, with respect to such a $\alpha_1 = \alpha_1 (N,p,\Lambda ) \in (0,p)$, and $\varepsilon_1$ to be given in \eqref{goodepsilonONE}, we may apply either Propositions \ref{prop:above} or \ref{prop:below} to deduce that we can find some suitable $d_2 \in \mathbb{R}$ with $|d_2| \leq \frac{\tilde{\lambda}}{2}$ for which we have
$$
\big | u_2 - d_2 \big | \leq 2 - \frac{\tilde{\lambda}}{2} \qquad \mathrm{on}\ \ \ [-1,0]\times B(\frac{1}{2}).
$$

\noindent{\bf Step 3: Successive re-scalings and improved oscillations at finer and finer scales}\\
Let $\alpha_1 \in (1,p)$ to be the same absolute constant in \textbf{Step 2} which satisfies both \eqref{check3} and \eqref{Check4}, and let $\varepsilon_1 \in (0,1)$ to be the one specified in \eqref{goodepsilonONE}. Inductively, suppose that we have done re-scalings on the original solution $u : [-4,0]\times \mathbb{R}^N \rightarrow \mathbb{R}$ for $m-1$-times; so that we have found a list of numbers $d_1, d_2 , d_3,...d_{m-1} \in \mathbb{R}$, each of them satisfies $|d_j| \leq \frac{\tilde{\lambda}}{2}$, in such a way that the associated list of re-scaled functions $u_j$ as determined by the following recurrence relation (for $2 \leq k \leq m$)
\begin{equation}\label{recurrencerelation}
u_k(t,x) = \frac{4}{4-\tilde{\lambda}} \bigg \{ u_{k-1} (\varepsilon_1^{\alpha_1}t , \epsilon_1 x ) - d_{k-1}   \bigg \}
\end{equation}
satisfy the following properties.
\begin{itemize}
\item For each $1 \leq j \leq m$, $u_j$ is a weak solution to \eqref{rescaledONE} on $[-4,0]\times B(1)$, and simultaneously a viscosity solution to \eqref{rescaledTWO} on $[-4 , 0]\times \mathbb{R}^N$.
\item For each $1 \leq j \leq m$, the relation $|u_j(t,x)| \leq 2 + q(|x| - 1 )_+$ holds for all $(t,x) \in [-4,0]\times \mathbb{R}^N$.
\end{itemize}
Since $u_m$ satisfies the above two properties, we apply either propositions \ref{prop:above} or \ref{prop:below} to deduce that either $u_m \leq 2-\tilde{\lambda}$ or else $u_m \geq -2 + \tilde{\lambda}$ must hold on $[-1, 0 ]\times B(\frac{1}{2})$. In other words, we can find some suitable $d_m \in \mathbb{R}$ with $|d_m| \leq \frac{\tilde{\lambda}}{2}$ for which we have
$$
\big | u_m - d_m \big | \leq 2 - \frac{\tilde{\lambda}}{2} \qquad \mathrm{on}\ \ \ [-1,0]\times B(\frac{1}{2}).
$$
Now, consider the re-scaled function $u_{m+1} : [-\frac{4}{\varepsilon_1^{\alpha_1}}, 0]\times \mathbb{R}^N \rightarrow \mathbb{R}$ defined as follows
\begin{equation*}
u_{m+1}(t,x) = \frac{4}{4-\tilde{\lambda}} \bigg \{ u_{m} (\varepsilon_1^{\alpha_1}t , \epsilon_1 x ) - d_{m}   \bigg \}.
\end{equation*}
$u_{m+1}$ is then a weak solution to \eqref{rescaledONE} on $ [-\frac{4}{\varepsilon_1^{\alpha_1}}, 0]\times B(\frac{1}{\varepsilon_1})$, and simultaneously a viscosity solution to \eqref{rescaledTWO} on $[-\frac{4}{\varepsilon_1^{\alpha_1}}, 0]\times \mathbb{R}^N$.
By construction, we also have
$$
\big | u_{m+1} \big | \leq 2 \qquad \mathrm{on}\ \ \ [-\frac{1}{\varepsilon_1^{\alpha_1}},0]\times B(\frac{1}{2\varepsilon_1}),
$$
and that
$$
\big |u_{m+1}(t,x) \big | \leq  \frac{4}{4-\tilde{\lambda}}\bigg \{ 2+ q(\varepsilon_1 |x| -1)_+ + \frac{\tilde{\lambda}}{2} \bigg \}
\qquad \mathrm{on}\ \ \ [-\frac{4}{\varepsilon_1^{\alpha_1}}, 0]\times \mathbb{R}^N .
$$
So, to show that $|u_{m+1}(t,x)| \leq 2 + q(|x|-1)_+$ holds on $[-4,0]\times \mathbb{R}^N$, it suffices to show that the following relation holds on
$\mathbb{R}^N$
\begin{equation}\label{keyestimatefinal}
\frac{4}{4-\tilde{\lambda}}\bigg \{ 2+ q(\varepsilon_1 |x| -1)_+ + \frac{\tilde{\lambda}}{2} \bigg \} \chi_{\{|x| \geq \frac{1}{2 \varepsilon_1}\}}
\leq \big \{ 2 + q (|x| - 1)_+ \big \} \chi_{\{|x| \geq \frac{1}{2 \varepsilon_1}\}} .
\end{equation}
Thanks to the fact that

\begin{equation*}
\bigg ( \frac{4}{4-\tilde{\lambda}} \bigg ) \varepsilon_1 = \big ( \frac{4-\tilde{\lambda}}{4} \big )^{\frac{\alpha_1-1}{p-\alpha_1}} < 1 ,
\end{equation*}
the expression $\big \{ 2 + q (|x| - 1)_+ \big \}$ glows at a rate faster than that of $ \frac{4}{4- \tilde{\lambda}} \big \{ 2+ q(\varepsilon_1 |x| -1)_+ + \frac{\tilde{\lambda}}{2} \big \}$, as $|x|$ increases. Hence, it follows that \eqref{keyestimatefinal} holds on $\mathbb{R}^N$ if and only if the following relation holds
\begin{equation}
 \frac{4}{4-\tilde{\lambda}}\bigg \{ 2+ q( \frac{1}{2} -1)_+ + \frac{\tilde{\lambda}}{2} \bigg \} \leq 2 + q \big ( \frac{1}{2\varepsilon_1} -1 \big ) ,
\end{equation}
which, however, is nothing but relation \eqref{check3}. Since the pair $\varepsilon_1 \in (0,1)$, $\alpha_1 \in (1,p)$ definitely satisfies \eqref{check3}, we deduce that $|u_{m+1}(t,x)| \leq 2+ q(|x| -1)_+$ holds on $[-4,0]\times \mathbb{R}^N$. \\
So, inductively, we are able to construct a sequence of numbers $\{d_m\}_{m=1}^{\infty}$ with $|d_m| \leq \frac{\tilde{\lambda}}{2}$ such that for the associated sequence of successively re-scaled functions $\{u_m\}_{m=1}^{\infty}$ as defined in \eqref{recurrencerelation}, we have the following property
\begin{itemize}
\item For each $m \geq 2$, $u_m$ is a weak solution to \eqref{rescaledONE} on $[-4,0]\times B(1)$, and simultaneously a viscosity solution to \eqref{rescaledTWO} on $[-4,0]\times \mathbb{R}^N$. Moreover, the relation $|u_m(t,x)| \leq 2 + q(|x| -1)_+ $ holds on $[-4,0]\times \mathbb{R}^N$ ,
\end{itemize}
which allows us to use either propositions \ref{prop:above} or \ref{prop:below} to obtain the following improved oscillations at finer and finer scales
\begin{equation}\label{improvedOsc}
\big | u_m - d_m \big | \leq 2 - \frac{\tilde{\lambda}}{2} \qquad \mathrm{on}\ \ \ [-1,0]\times B(\frac{1}{2}) ,
\end{equation}
which holds for every $m \in \mathbb{Z}^+$. Then, it follows directly from \eqref{improvedOsc} that the following property holds for all $m \geq 0$
\begin{equation*}
\osc_{Q_m} u_1 \leq 4  \bigg ( \frac{4-\tilde{\lambda}}{4} \bigg )^{m+1} ,
\end{equation*}
in which $Q_m = [-\varepsilon_1^{m\alpha_1} ,0]\times B(\frac{\varepsilon_1^m}{2})$. So, we deduce that $u(0,\cdot ) $ must be Holder continuous at the origin $O \in \mathbb{R}^N$. Actually, for any $x_0 \in \mathbb{R}^N$, we can apply the same argument to the shifted function $u(\cdot , \cdot + x_0)$ in exactly the same way to deduce that the following improved oscillations of $u_1$ holds at smaller and smaller scales around the base point $x_0$.
\begin{equation*}
\osc_{Q_m(x_0)} u_1 \leq 4  \bigg ( \frac{4-\tilde{\lambda}}{4} \bigg )^{m+1} ,
\end{equation*}
where $Q_m(x_0)$ stands for $Q_m(x_0) = [-\varepsilon_1^{m\alpha_1} ,0]\times B_{x_0}(\frac{\varepsilon_1^m}{2})$. This means that the original function $u(0,\cdot )$ is actually Holder's continuous around each $x_0 \in \mathbb{R}^N$. Hence, we may conclude that $u (0,\cdot ) \in C^{\alpha}(\mathbb{R}^N)$ holds for some $\alpha \in (0,1)$, which depends only on $N$, $p$, and $\Lambda$.
\end{proof}
 $$
$$
  \bibliography{ChanVasseur}
\bibliographystyle{plain}

\end{document}